\let\pa=\partial
\let\al=\alpha
\let\d=\delta
\let\e=\varepsilon
\let\r=\rho
\let\s=\sigma
\let\f=\frac
\let\p=\psi
\let\D=\Delta
\let\Om=\Omega
\let\wt=\widetilde
\def\cM{{\cal M}}
\def\cM{{\mathcal M}}
\def\na{\nabla}
\def\p{\partial}
\def\dv{\mbox{div}}
\def\dive{\mathop{\rm div}\nolimits}
\def\eqdefa{\buildrel\hbox{\footnotesize def}\over =}
\def\C{\mathop{\bf C\kern 0pt}\nolimits}
\def\DD{\mathop{\bf D\kern 0pt}\nolimits}
\def\K{\mathop{\bf K\kern 0pt}\nolimits}
\def\N{\mathop{\bf N\kern 0pt}\nolimits}
\def\Q{\mathop{\bf Q\kern 0pt}\nolimits}
\def\R{\mathop{\bf R\kern 0pt}\nolimits}
\newcommand{\la}{\lambda}
\newcommand{\beq}{\begin{equation}}
\newcommand{\eeq}{\end{equation}}
\newcommand{\ben}{\begin{eqnarray}}
\newcommand{\een}{\end{eqnarray}}
\newcommand{\beno}{\begin{eqnarray*}}
\newcommand{\eeno}{\end{eqnarray*}}
\newtheorem{thm}{Theorem}[section]
\newtheorem{lem}{Lemma}[section]
\newtheorem{rmk}{Remark}[section]
\renewcommand{\theequation}{\thesection.\arabic{equation}}
\begin{document}
\title[Global unique solvability of inhomogeneous NS equations]
{ Global  unique solvability of  inhomogeneous Navier-Stokes
equations with bounded density }
\author[M. PAICU]{Marius Paicu}
\address [M. PAICU]
{Universit\'e  Bordeaux 1\\
 Institut de Math\'ematiques de Bordeaux\\
F-33405 Talence Cedex, France}
\email{marius.paicu@math.u-bordeaux1.fr}
\author[P. ZHANG]{Ping Zhang}%
\address[P. ZHANG]
 {Academy of
Mathematics $\&$ Systems Science and  Hua Loo-Keng Key Laboratory of
Mathematics, The Chinese Academy of Sciences\\
Beijing 100190, CHINA } \email{zp@amss.ac.cn}
\author[Z. ZHANG]{Zhifei Zhang}\address[Z. ZHANG]
{School of  Mathematical Science, Peking University, Beijing 100871,
P. R. CHINA} \email{zfzhang@math.pku.edu.cn}
\date{Dec. 22, 2012}
\maketitle
\begin{abstract} In this paper,
we prove the global existence and uniqueness of solution to
d-dimensional (for $d=2,3$) incompressible inhomogeneous
Navier-Stokes equations with initial density being bounded from
above and below by some positive constants,  and with initial
velocity $u_0\in H^s(\R^2)$ for $s>0$ in 2-D, or $u_0\in H^1(\R^3)$
satisfying $\|u_0\|_{L^2}\|\na u_0\|_{L^2}$ being sufficiently small
in 3-D. This in particular improves the most recent well-posedness
result in \cite{dm2}, which requires the initial velocity  $u_0\in
H^2(\R^d)$ for the local well-posedness result, and a smallness
condition on the fluctuation of the initial density for the global
well-posedness result.
\end{abstract}

\noindent {\sl Keywords:} Inhomogeneous  Navier-Stokes equations,
well-posedness,
 Lagrangian coordinates.\\

\noindent {\sl AMS Subject Classification (2000):} 35Q30, 76D05 \\

\renewcommand{\theequation}{\thesection.\arabic{equation}}
\setcounter{equation}{0}
\section{Introduction}
In this paper, we consider the  global existence and uniqueness of
the solution to the following d-dimensional (for $d=2,3$)
incompressible inhomogeneous Navier-Stokes equations with initial
density in $L^\infty(\R^d)$ and having a positive lower bound:
\begin{equation}\label{eq:InhomoNS}
\left\{
\begin{array}{ll}
\p_t\rho+u\cdot\na\rho=0,\qquad (t,x)\in\R^+\times\R^d,\\
\rho(\p_tu+u\cdot\na u)-\Delta u+\na p=0, \\
\textrm{div} u=0,\\
(\rho,u)|_{t=0}=(\rho_0,u_0),
\end{array}
\right.
\end{equation}
where $\rho, u$ stand for the density and  velocity of the fluid
respectively, $p$  is a scalar pressure function, and the viscosity
coefficient is supposed to be $1.$ Such a system describes a fluid
which is obtained by mixing two miscible fluids that are
incompressible and that have different densities. It may also
describe a fluid containing a melted substance. One may check
\cite{LP} for the detailed derivation of this system.

Given $0\leq\r_0\in L^\infty(\R^d),$ and $u_0$ satisfying $\dive
u_0=0$, $\sqrt{\r_0}u_0\in L^2(\R^d),$ Lions \cite{LP} (see also
\cite{AKM, Simon} and the references therein for an overview of
results on weak solutions of \eqref{eq:InhomoNS}) proved that
\eqref{eq:InhomoNS} has a global weak solution so that \beno
\f12\|\sqrt{\r(t)}u(t)\|_{L^2}^2+\int_0^t\|\na
u(\tau)\|_{L^2}^2\,d\tau\leq \f12\|\sqrt{\r_0}u_0\|_{L^2}^2. \eeno
Moreover, for any $\al$ and $\beta,$ the Lebesgue measure \beno
\mu\bigl\{x\in\R^d;\ \ \al\leq\r(t,x)\leq\beta\ \bigr\}\quad\mbox{is
independent of }\ t. \eeno In dimension two and under the additional
assumption that $\rho_0$ is bounded below by a positive constant and
$\na u_0\in L^2(\R^2),$ smoother weak solutions may be built. Their
existence stems from a quasi-conservation law involving the norm of
$\na u\in L^\infty((0,T); L^2(\R^2))$ and of $\p_tu, \na p$ and
$\na^2 u\in L^2((0,T); L^2(\R^2))$ for any $T<\infty.$ For both
types of weak solutions however, the problem of uniqueness has not
been solved.

Lady\v zenskaja and Solonnikov  \cite{LS} first addressed the
question of unique solvability of (\ref{eq:InhomoNS}). More
precisely, they  considered the system \eqref{eq:InhomoNS} in a
bounded domain $\Om$ with homogeneous Dirichlet boundary condition
for $u.$ Under the assumption that $u_0\in W^{2-\frac2p,p}(\Om)$
$(p>d)$ is divergence free and vanishes on $\p\Om$ and that $\r_0\in
C^1(\Om)$ is bounded away from zero, then they \cite{LS} proved
\begin{itemize}
\item Global well-posedness in dimension $d=2;$
\item Local well-posedness in dimension $d=3.$ If in addition $u_0$ is small in $W^{2-\frac2p,p}(\Om),$
then global well-posedness holds true.
\end{itemize}
More recently, Danchin \cite{danchin2} established the
well-posedness of the system \eqref{eq:InhomoNS} in the whole space
$\R^d$ for small perturbations of some constant density. Abidi, Gui
and Zhang \cite{AGZ1} investigated the large time decay and global
stability to any global smooth solutions of (\ref{eq:InhomoNS}).

Another important feature of \eqref{eq:InhomoNS} is the scaling
invariant property: if $(\r,u)$ is a solution of \eqref{eq:InhomoNS}
associated to the initial data $(\r_0, u_0)$, then $(\r(\la^2t,\la
x), \la u(\la^2t,\la x))$ is also  a solution of \eqref{eq:InhomoNS}
associated to the initial data $(\r_0(\la x), \la u_0(\la x)).$ A
functional space for the data $(\r_0,u_0)$ or for the solution
$(\r,u)$ is said to be at the scaling of the equation if its norm is
invariant under the above transformation. In this framework, it has
been stated in \cite{abidi, danchin} that for the initial data
$(\r_0, u_0)$ satisfying \beno (\r_0-1)\in
\dot{B}^{\f{d}p}_{p,1}(\R^d),\  u_0\in
\dot{B}^{\f{d}p-1}_{p,1}(\R^d)\ \mbox{with}\ \dive u_0=0 \eeno and
that for a small enough constant $c$ \beno
\|\r_0-1\|_{\dot{B}^{\f{d}p}_{p,1}}+\|u_0\|_{\dot{B}^{\f{d}p-1}_{p,1}}\leq
c, \eeno we have for any $p\in [1, 2d)$
\begin{itemize}
\item existence of global solution $(\r, u, \na p)$ with $\r-1\in
C_b([0,\infty);$ $\dot{B}^{\f{d}p}_{p,1}(\R^d)), $  $u\in
C_b([0,\infty);$ $\dot{B}^{\f{d}p-1}_{p,1}(\R^d)),$ and $\p_tu,
\na^2 u, \na p\in L^1(\R^+;\dot{B}^{\f{d}p-1}_{p,1}(\R^d));$
\item uniqueness in the above space if in addition $p\leq d.$
\end{itemize}
These results have been somewhat extended in \cite{AP} so that $u_0$
belongs to a larger Besov space. Paicu and Zhang \cite{PZ2} further
extended the well-posedness result in \cite{AP} so that even if one
component of the initial velocity is large, \eqref{eq:InhomoNS}
still has a unique global solution. The smallness assumption for the
initial density in \cite{abidi, danchin} has also been removed in
\cite{AGZ2},  and the restriction of $p\in [1,d]$ for uniqueness
result in \cite{abidi, danchin} has been removed recently in
\cite{dm}.

A byproduct in \cite{AGZ2} implies the global existence of solutions
to \eqref{eq:InhomoNS} in 3-D with initial density in
$L^\infty(\R^3)$ and having a positive lower bound, and initial
velocity being sufficiently small in $H^2(\R^3).$ The authors
\cite{dm} proved the global wellposedness of \eqref{eq:InhomoNS}
provided that \beno
\|\r_0-1\|_{\cM(\dot{B}^{\f{d}p-1}_{p,1})}+\|u_0\|_{\dot{B}^{\f{d}p-1}_{p,1}}\leq
c, \eeno for some sufficiently small constant $c,$ where
$\cM(\dot{B}^{\f{d}p-1}_{p,1}(\R^d))$ denotes the multiplier space
of $\dot{B}^{\f{d}p-1}_{p,1}(\R^d).$ This space in particular
includes initial densities having small jumps across a $C^1$
interface.

Again in the scaling invariant framework, the authors \cite{HPZ2}
proved the global existence of weak solutions to \eqref{eq:InhomoNS}
provided that the initial data satisfy the nonlinear smallness
condition:
$$ \bigl(\|\r_0^{-1}-1\|_{L^\infty}+\|u_0^h\|_{\dot{B}^{-1+\frac{d}p}_{p,r}}\bigr)\exp\bigl(
C_r\|u_0^d\|_{\dot{B}^{-1+\frac{d}p}_{p,r}}^{2r}\bigr)\leq c_0\mu
$$ for some positive constants $c_0, C_r$ and $1< p<d,$
$1<r<\infty,$ where $u_0^h=(u_0^1,\cdots,u_0^{d-1})$ and
$u_0=(u_0^h,u_0^d).$ With a little bit more regularity assumption on
the initial velocity, they \cite{HPZ2} also proved the uniqueness of
such solutions.

In general when $\r_0\in L^\infty(\R^d)$ with a positive lower bound
and $u_0\in H^2(\R^d),$ Danchin and Mucha \cite{dm2} proved that the
system \eqref{eq:InhomoNS} has a unique local solution. Furthermore,
with the initial density fluctuation being sufficiently small, for
any initial velocity $u_0\in B^1_{4,2}(\R^2)\cap L^2(\R^2)$ in two
space dimensions, and $u_0\in B^{2-\f2q}_{q,p}(\R^d)$ with
$1<p<\infty, d<q<\infty$ and $2-\f2p\neq\f1q,$ they also proved the
global well-posedness of \eqref{eq:InhomoNS}.

On the other hand, Hoff \cite{Hoff, Hoff2} proved the global
existence of small energy solutions to the isentropic compressible
Navier-Stokes system. The main idea in \cite{Hoff, Hoff2} is that
with appropriate time weight (see Remark \ref{rmk1.2} for details),
one can close the energy estimate for space derivatives of the
velocity field even if the initial velocity only belongs to
$L^2(\R^d).$ Motivated by \cite{Hoff, Hoff2}, we shall investigate
the global well-poseness of \eqref{eq:InhomoNS} with less regular
initial velocity than that in \cite{dm2} and without the small
fluctuation assumption on the initial density. We emphasize that the
Lagrangian idea introduced in \cite{dm, dm2} will also be essential
for the proof of the uniqueness result here.

Our main results in this paper can be listed as follows.

\begin{thm}\label{thm:existenc-2D}
{\sl Let $s>0$. Given the initial data $(\rho_0,u_0)$ satisfying
\beq\label{h.7}
0<c_0\le \rho_0(x)\le C_0<+\infty,\quad u_0\in
H^s(\R^2),
\eeq
the system (\ref{eq:InhomoNS}) has a unique global solution
$(\r, u)$
 such that \beq\label{g.1}
 \begin{split}
&c_0\le \rho(t,x)\le C_0\quad\textrm{ for} \quad (t,x)\in [0,+\infty)\times \R^2,\\
&A_0(t)\le C\|u_0\|_{L^2}^2,\\
&A_1(t)\le C\|u_0\|_{H^s}^2\exp\big\{C\|u_0\|_{L^2}^4\big\},\\
&A_2(t)\le
C\big(1+\|u_0\|_{L^2}^8\big)(\|u_0\|_{H^s}^2+\|u_0\|_{H^s}^4)\exp\big\{C\|u_0\|_{L^2}^4\big\},
\end{split}
\eeq for any $t\in [0,+\infty)$. Here $C$ is a constant depending on $c_0, C_0$, and $A_0(t),
A_1(t)$, and $A_2(t)$ are defined by
\beno\begin{split}
&A_0(T)\eqdefa\f12\sup_{t\in [0,T]}\int_{\R^2}\rho|u(t,x)|^2\,dx+\int_0^T\int_{\R^2}|\na u|^2\,dx dt,\\
&A_1(T)\eqdefa\f12\sup_{t\in [0,T]}\sigma(t)^{1-s}\int_{\R^2}|\na
u(t,x)|^2\,dx+\int_0^T\int_{\R^2}\sigma(t)^{1-s}\bigl(\rho|u_t|^2
+|\na^2u|^2+|\na p|^2\bigr)\,dxdt,\\
&A_2(T)\eqdefa\f12\sup_{t\in
[0,T]}\sigma(t)^{2-s}\int_{\R^2}\bigl(\rho|u_t(t,x)|^2+|\na^2u(t,x)|^2+|\na
p|^2\bigr)\,dx\\
&\qquad\qquad+\int_0^T\int_{\R^2}\sigma(t)^{2-s}|\nabla
u_t|^2\,dxdt,\end{split} \eeno
with $\sigma(t)\eqdefa\min(1,t)$.}
\end{thm}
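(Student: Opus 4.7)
The overall plan is to derive a priori estimates on smooth solutions corresponding to the three quantities $A_0$, $A_1$, $A_2$, construct the global solution by smoothing the initial data and invoking the local theory of \cite{dm2}, and finally prove uniqueness in Lagrangian coordinates following \cite{dm,dm2}. The exponents $1-s$ and $2-s$ on $\sigma(t)$ are exactly tuned to the heat-semigroup smoothing available for initial data in $H^s$ with $s\in(0,1)$: a direct Fourier computation shows that $\int_0^\infty t^{-s}\|\na e^{t\D}u_0\|_{L^2}^2\,dt\le C\|u_0\|_{\dot H^s}^2$ (the $t$-integral produces $\G(1-s)|\xi|^{2s-2}$ from $e^{-2t|\xi|^2}$), which is exactly what is needed to close the $A_1$-estimate below.

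For $A_0$ I would test the momentum equation by $u$, use the transport equation and $\dive u=0$ to obtain $A_0(T)\le C\|u_0\|_{L^2}^2$; the pointwise bound $c_0\le\r\le C_0$ is preserved by the characteristics of the transport equation. For $A_1$, I would test by $u_t$, which, since $\dive u_t=0$, kills the pressure and yields
\beno
\int\r|u_t|^2\,dx+\f12\f{d}{dt}\|\na u\|_{L^2}^2=-\int\r u\cdot\na u\cdot u_t\,dx.
\eeno
Multiplying by $\sigma^{1-s}$ and integrating produces a boundary term $\sigma^{1-s}\|\na u\|^2$, a dissipative term $\int\sigma^{1-s}\r|u_t|^2$, and an auxiliary term $(1-s)\int_0^{T\wedge 1}t^{-s}\|\na u\|_{L^2}^2\,dt$ arising from the derivative of the weight. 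This auxiliary term is the key technical point and is controlled by the Fourier computation above applied to the linear part, plus a Duhamel iteration for the nonlinear part, together with the $L^\infty_tL^2_x$-bound on $u$ and the 2D Ladyzhenskaya inequality $\|v\|_{L^4}^2\le C\|v\|_{L^2}\|\na v\|_{L^2}$. Elliptic Stokes regularity $\|\na^2 u\|_{L^2}+\|\na p\|_{L^2}\le C(\|\r u_t\|_{L^2}+\|\r u\cdot\na u\|_{L^2})$ then adds the remaining contributions to $A_1$. For $A_2$, I would differentiate the momentum equation in time, use $\p_t\r=-u\cdot\na\r$ to handle the density derivative, and test by $u_t$; the principal part reads $\f12\f{d}{dt}\int\r|u_t|^2+\int|\na u_t|^2=\mbox{nonlinear terms}$, controlled by Gagliardo--Nirenberg in 2D and by quantities already bounded by $A_0$ and $A_1$. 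Multiplying by $\sigma^{2-s}$, the derivative of the weight contributes a term proportional to $\sigma^{1-s}\r|u_t|^2$, whose time integral is already part of $A_1$, so the estimate closes by a Gronwall argument.

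With the uniform a priori bounds in hand, global existence follows by smoothing $u_0$ to $u_0^n\in H^2$, applying the local existence theorem of \cite{dm2} to produce smooth solutions $(\r^n,u^n)$, extending them globally using the uniform bounds $A_0$, $A_1$, $A_2$ (which require no smallness in 2D, thanks to the 2D Ladyzhenskaya inequality), and then passing to the limit by standard compactness arguments.

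The step I expect to be the main obstacle is uniqueness: since $\r_0$ is only bounded, comparing two Eulerian solutions would lose a derivative on the density through the difference $(\r_1-\r_2)(u\cdot\na u)$. Following \cite{dm,dm2}, I would rewrite the system in Lagrangian coordinates $X(t,y)=y+\int_0^tu(\tau,X(\tau,y))\,d\tau$, where the density becomes the time-independent function $\r_0(y)$ and the transport equation disappears entirely. The admissibility of this change of variables rests on the Lipschitz bound $\int_0^T\|\na u\|_{L^\infty}\,dt<\infty$, extracted from $A_1$ and $A_2$ via the 2D embedding $H^{1+\e}\hookrightarrow L^\infty$; here the restriction $s>0$ (rather than $s\ge 0$) is essential. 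Uniqueness then follows from standard energy estimates for the difference of two solutions of the resulting perturbed Stokes system with time-independent coefficient $\r_0(y)$.
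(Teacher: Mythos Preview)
Your overall architecture---a priori bounds for $A_0,A_1,A_2$ on mollified solutions, compactness, then Lagrangian uniqueness---matches the paper exactly, and your treatment of $A_0$, $A_2$, and the uniqueness step are essentially the paper's arguments.

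The one place your proposal diverges substantively, and where there is a real gap, is the $A_1$ estimate. You multiply the $u_t$-identity by $\sigma^{1-s}$ and are left with the auxiliary term $(1-s)\int_0^{T\wedge 1}\tau^{-s}\|\na u(\tau)\|_{L^2}^2\,d\tau$, which you propose to control by the heat-semigroup Fourier computation for the linear part plus a Duhamel iteration for the remainder. The difficulty is that the equation is $\rho u_t-\Delta u+\na p=-\rho u\cdot\na u$ with $\rho\in L^\infty$ merely bounded between $c_0$ and $C_0$; if you force it into Duhamel form $u_t-\Delta u+\na p=(1-\rho)u_t-\rho u\cdot\na u$, the source carries $(1-\rho)u_t$ with an $O(1)$ coefficient, so the quantity you are trying to bound reappears on the right at the same strength. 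Moreover, the expected short-time behaviour $\|\na u(\tau)\|_{L^2}^2\sim\tau^{s-1}$ makes $\tau^{-s}\|\na u(\tau)\|_{L^2}^2\sim\tau^{-1}$, so a naive bootstrap is exactly borderline-divergent; there is no visible slack for your iteration to close.

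The paper avoids this entirely by an interpolation trick: it never multiplies by $\sigma^{1-s}$ directly. Instead it proves the two endpoint estimates---weight $\sigma$ (formally $s=0$) and no weight (formally $s=1$)---by pure energy arguments, where the weight-derivative term is simply $\int_0^t\|\na u\|_{L^2}^2\le A_0$. It then freezes $(\rho,u)$, considers the \emph{linear} map $v_0\mapsto v$ solving $\rho(\partial_t v+u\cdot\na v)-\Delta v+\na p=0$, and applies Riesz--Thorin to $v_0\mapsto\na v(t)$ and Stein interpolation to the analytic family $T_z v_0=\sigma^{z/2}\rho^{1/2}v_t$ between $H^1$ and $L^2$. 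This yields the $\sigma^{1-s}$-weighted bound for $v_0\in H^s$ without ever confronting the singular integral $\int\tau^{-s}\|\na u\|^2$. You should replace your Fourier/Duhamel step by this interpolation argument; everything else in your outline is sound.
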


\begin{thm}\label{thm:existence-3D}
{\sl Given the initial data $(\rho_0,u_0)$ satisfying \beq\label{g.5}
0<c_0\le \rho_0(x)\le C_0<+\infty,\quad u_0\in H^1(\R^3), \eeq there
exists a constant $\varepsilon_0>0$ depending only on  $C_0$ such
that if \beq\label{ass:small} \|u_0\|_{L^2}\|\na u_0\|_{L^2}\le
\varepsilon_0, \eeq
the system (\ref{eq:InhomoNS}) has a unique global solution
$(\rho,u)$ which satisfies \beq\label{g.4}
\begin{split}
&c_0\le \rho(t,x)\le C_0\quad \textrm{for} \quad (t,x)\in [0,+\infty)\times \R^3,\\
&B_0(t)\le \|\rho_0^\f12u_0\|_{L^2}^2,\\
&B_1(t)\le 2\|\na u_0\|_{L^2}^2,\\
&B_2(t)\le C\big(1+\|\na u_0\|_{L^2}^4\big)\|\na
u_0\|_{L^2}^2\exp\bigl\{C(\|u_0\|_{L^2}^2+\|\na
u_0\|_{L^2}^2)\bigr\}, \end{split} \eeq for any $t\in [0,+\infty)$.
Here $C$ is a constant depending on $C_0$, and $B_0(t), B_1(t),$ and
$ B_2(t)$ are defined by \beno
\begin{split}
&B_0(T)\eqdefa\sup_{t\in [0,T]}\int_{\R^3}\rho|u(t,x)|^2\,dx+2\int_0^T\int_{\R^3}|\na u|^2\,dx dt,\\
&B_1(T)\eqdefa\sup_{t\in [0,T]}\int_{\R^3}|\na u(t,x)|^2\,dx+2\int_0^T\int_{\R^3}\bigl(\rho|u_t|^2+|\na^2u|^2+|\na p|^2\bigr)\,dxdt,\\
&B_2(T)\eqdefa\sup_{t\in
[0,T]}\sigma(t)\int_{\R^3}\bigl(\rho|u_t(t,x)|^2+|\na^2u(t,x)|^2+|\na
p|^2\bigr)\,dx+\int_0^T\int_{\R^3}\sigma(t)|\nabla u_t|^2dxdt.
\end{split} \eeno}
\end{thm}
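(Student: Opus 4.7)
The plan is to follow the classical scheme of (i) constructing smooth approximate solutions (e.g.\ by Friedrichs mollification of the initial data or the method of \cite{dm2}), (ii) deriving a priori estimates uniform in the approximation parameter for $B_0, B_1, B_2$ under the smallness hypothesis \eqref{ass:small}, (iii) passing to the limit by weak compactness to obtain a solution satisfying \eqref{g.4}, and (iv) proving uniqueness by rewriting the system in Lagrangian coordinates, as in \cite{dm,dm2}. Throughout the a priori estimates the maximum principle for the transport equation gives $c_0\le\rho(t,x)\le C_0$ automatically, so the essential work is the velocity estimates.

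The estimate for $B_0$ is the standard energy identity: test the momentum equation by $u$, use $\dive u=0$ and the mass equation to see that $\int\rho|u|^2\,dx$ is dissipated exactly by $2\int|\na u|^2$. For $B_1$, I would test the momentum equation by $u_t$ to get
\beq\label{plan:B1}
\int\rho|u_t|^2\,dx+\f12\f{d}{dt}\int|\na u|^2\,dx=-\int\rho(u\cdot\na u)\cdot u_t\,dx,
\eeq
and combine this with the Stokes regularity estimate
\beno
\|\na^2 u\|_{L^2}+\|\na p\|_{L^2}\le C\bigl(\|\rho u_t\|_{L^2}+\|\rho u\cdot\na u\|_{L^2}\bigr)
\eeno
obtained from $-\Delta u+\na p=-\rho u_t-\rho u\cdot\na u$ with $\dive u=0$. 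Using $\rho\le C_0$ together with the 3-D interpolations $\|u\|_{L^6}\lesssim\|\na u\|_{L^2}$ and $\|\na u\|_{L^3}\lesssim\|\na u\|_{L^2}^{1/2}\|\na^2 u\|_{L^2}^{1/2}$, the nonlinear term in \eqref{plan:B1} is controlled by $C\|\sqrt\rho u_t\|_{L^2}\|\na u\|_{L^2}^{3/2}\|\na^2 u\|_{L^2}^{1/2}$. A continuity (bootstrap) argument is then set up: assume $\|\na u(t)\|_{L^2}^2\le 4\|\na u_0\|_{L^2}^2$ on a maximal interval $[0,T^*)$, observe that the combination $\|u_0\|_{L^2}\|\na u_0\|_{L^2}\le \e_0$ together with the $B_0$ bound controls $\|u\|_{L^2}\|\na u\|_{L^2}$ uniformly, and this exactly supplies the scale-invariant smallness needed to absorb the nonlinear term into the good quantities $\int\rho|u_t|^2+\|\na^2 u\|_{L^2}^2+\|\na p\|_{L^2}^2$. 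Taking $\e_0$ small enough depending only on $C_0$, this closes the estimate with the improved bound $B_1(T^*)\le 2\|\na u_0\|_{L^2}^2$, so $T^*=+\infty$.

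The estimate for $B_2$ is the Hoff-type time-weighted step. Differentiate the momentum equation in $t$ and test by $u_t$, multiplying by the weight $\sigma(t)=\min(1,t)$ to compensate for the lack of $H^2$-regularity of $u_0$ (this is what Remark 1.2 alluded to in the introduction). The only delicate terms are those arising from $\p_t\rho=-u\cdot\na\rho$ acting on $\int\rho|u_t|^2$; these are handled by integration by parts and the divergence-free condition, reducing them to cubic expressions in $u$, $u_t$, $\na u$ that are dominated by $\|\na u_t\|_{L^2}^2$ (absorbed) times lower-order factors already controlled by $B_1$. A Gronwall argument, with exponential factor $\exp\{C(\|u_0\|_{L^2}^2+\|\na u_0\|_{L^2}^2)\}$ coming from those lower-order factors, then gives the prescribed bound on $B_2$. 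The Stokes estimate is used once more to translate the control of $\sqrt\rho u_t$ into control of $\na^2 u$ and $\na p$.

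For uniqueness, the known obstruction is that with $\rho_0$ only in $L^\infty$ the transport equation for $\rho$ cannot be handled at the Eulerian level with only $H^1$ velocity regularity. Following \cite{dm,dm2}, I would pass to Lagrangian coordinates $X(t,y)$ defined by $\dot X=u(t,X)$, $X(0,y)=y$; then the density is frozen to $\rho_0(y)$ and the momentum equation becomes a quasilinear perturbation of a linear Stokes-type system with coefficients depending on $\na X$. The bounds in $B_1$ and $B_2$ (in particular $\int_0^\infty\|\na u\|_{L^\infty}\,dt<\infty$, obtained from $\na^2 u\in L^2_t L^2_x\cap L^2_{t,\sigma} H^1_x$ via interpolation in 3-D) are exactly what is needed to show that the flow map is bi-Lipschitz and that $\na X-\mathrm{Id}$ is small in a suitable norm, which allows one to close a linearised estimate for the difference of two solutions in Lagrangian variables and conclude that it vanishes. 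The main obstacle throughout is the $B_1$ bootstrap: all subsequent estimates and the uniqueness argument depend on having the scale-invariant smallness $\|u_0\|_{L^2}\|\na u_0\|_{L^2}\le\e_0$ propagate to $\|u\|_{L^2}\|\na u\|_{L^2}$, which is delicate because no such product quantity is directly dissipated by the equation.
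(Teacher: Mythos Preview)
Your proposal is correct and follows essentially the same route as the paper: the $B_0$, $B_1$, $B_2$ estimates are derived exactly as you outline (the paper packages the $B_1$ bootstrap as the quadratic inequality $B_1(t)\le\|\na u_0\|_{L^2}^2+C\|u_0\|_{L^2}^2B_1(t)^2$, which is equivalent to your continuity argument), and uniqueness is carried out via the Lagrangian reformulation of \cite{dm,dm2}. One small correction: the paper does not obtain $\int_0^\infty\|\na u\|_{L^\infty}\,dt<\infty$ but only $\int_0^T\|\na u\|_{L^\infty}\,dt\le C\max(T^{1/4},T^{1/2})$, so uniqueness is first established on a short interval where $\na X-\mathrm{Id}$ is small and then propagated by a bootstrap in time.
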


\begin{rmk}
We should point out that we do not need  the lower bound assumption
for the initial density in the existence part of Theorem
\ref{thm:existence-3D}. Indeed, the constant $C$ in \eqref{g.4}  is
independent of $c_0$ in \eqref{g.5}. We can also prove the local
existence and uniqueness solution to the system (\ref{eq:InhomoNS})
even if the initial velocity does not satisfy the smallness
condition \eqref{ass:small}. One may check Remark \ref{rmk2.1} for
details.
\end{rmk}

\begin{rmk}\label{rmk1.2}
 The powers to the weight $\s(t)$ in the energy functionals, $A_i(t), B_i(t)$ for $i=1,2,$
 are
motivated by the following observation: let $s$ be a negative real
number and $(p,r)\in [1,\infty]^2,$ a constant $C$ exists such that
\beq\label{k.1} C^{-1}\|f\|_{\dot{B}^{s}_{p,r}}\leq
\bigl\|\|t^{-\f{s}2}e^{t\D}f\|_{L^p}\bigr\|_{L^r(\R^+;\f{dt}{t})}\leq
C\|f\|_{\dot{B}^{s}_{p,r}}, \eeq (see Theorem 2.34 of \cite{BCD} for
instance). In particular, if $u_0\in H^s(\R^2)$ for $s\in (0,1),$$
\na u_0\in H^{s-1}(\R^2)\hookrightarrow B^{s-1}_{2,\infty}(\R^2)$
and $\na^2u_0 \in B^{s-2}_{2,\infty}(\R^2).$ Then according to
\eqref{k.1}, \beno t^{\f{1-s}2}\|e^{t\D}\na u_0\|_{L^2}\leq
C\|u_0\|_{H^s} \quad\mbox{and}\quad t^{\f{2-s}2}\|e^{t\D}\na^2
u_0\|_{L^2}\leq C\|u_0\|_{H^s}.   \eeno This in some sense explains
the weights in \eqref{g.1}. Similarly, we can explain the weights in
\eqref{g.4}.
\end{rmk}

\begin{rmk} We should point out that we can not directly apply
Theorem 1 of \cite{dm2} concerning the uniqueness of solutions to
(\ref{eq:InhomoNS}) to conclude the uniqueness part of Theorem
\ref{thm:existenc-2D} and Theorem \ref{thm:existence-3D}. Yet the
Lagrangian idea in \cite{dm, dm2} can be successfully applied  to
prove the uniqueness part of both Theorems \ref{thm:existenc-2D} and
\ref{thm:existence-3D}. And the uniqueness result of Germain
\cite{Germain} can not be applied here either. The uniqueness result
of \cite{Germain}  requires the density function satisfying
$\na\r\in L^{\infty}([0,T];L^d(\R^d)),$ but here our density
function only belongs to $L^\infty([0,T]\times\R^d).$ Moreover, the
velocity field in Theorems \ref{thm:existenc-2D} and
\ref{thm:existence-3D} does not satisfy the time growth condition in
\cite{Germain}, especially in Theorem \ref{thm:existenc-2D}.
\end{rmk}

\setcounter{equation}{0}
\section{Global solutions to (\ref{eq:InhomoNS}) with large  bounded
density}\label{sect2}

The purpose of this section is to present the proof to the existence
part of both Theorem \ref{thm:existenc-2D} and Theorem
\ref{thm:existence-3D}.

\subsection{Existence of the solution in 2-D}

\begin{proof}[Proof to the existence part of  Theorem \ref{thm:existenc-2D}] Let $j_\epsilon$ be the standard Friedrich's
mollifier. We define \beno \rho_0^\epsilon=j_\epsilon\ast
\rho_0,\quad u_0^\epsilon=j_\epsilon\ast u_0. \eeno And we choose
$\epsilon$ so small  that \beno \f {c_0} 2\le \rho_0^\epsilon(x)\le
2C_0,\quad x\in \R^2. \eeno With the initial data $(\rho_0^\epsilon,
u_0^\epsilon)$, the system (\ref{eq:InhomoNS}) in 2-D has a unique
global smooth solution $(\rho^\epsilon, u^\epsilon).$  In what
follows, we shall only present uniform energy estimates \eqref{g.1}
for the approximate solutions $(\rho^\epsilon, u^\epsilon)$. Then
the existence part of Theorem \ref{thm:existenc-2D} essentially
follows from \eqref{g.1} for  $(\rho^\epsilon, u^\epsilon)$ and
 a standard compactness argument. The uniqueness part of Theorem \ref{thm:existenc-2D}
 will be proved in Section \ref{sect2.3}.

To simplify the notations, we will omit the superscript $\epsilon$
in what follows. First of all, applying the basic $L^2$ energy
estimate to (\ref{eq:InhomoNS}) gives \beq\label{eq:A0-est}
A_0(t)=\f12\|\rho_0^\f12 u_0\|_{L^2}^2\le C\|u_0\|_{L^2}^2\quad
\textrm{for}\quad t\in \R^+. \eeq While it follows from the
transport equation of (\ref{eq:InhomoNS}) and \eqref{h.7} that
\beq\label{eq:density} c_0\le \rho(t,x)\le C_0\quad
\textrm{for}\quad (t,x)\in \R^+\times \R^2. \eeq

To derive the estimate for $A_1(t),$ we get, by taking the $L^2$
inner product of the momentum equation of  (\ref{eq:InhomoNS}) with
$u_t,$ that \beno \int_{\R^2}\rho|u_t|^2\,dx+\f12\f d
{dt}\int_{\R^2}|\na u|^2\,dx=-\int_{\R^2}\rho(u\cdot \na u)\cdot
u_t\,dx, \eeno from which, we infer
\beq\label{eq:A1-est1}\begin{split}
\widetilde{A}_1(t)\eqdefa&\int_0^t\int_{\R^2}\sigma(\tau)\rho|u_t|^2\,dxd\tau+\f12\sigma(t)\int_{\R^2}|\na u(t,x)|^2\,dx\\
\le &-\int_0^t\int_{\R^2}\sigma(\tau)\rho(u\cdot \na u)\cdot
u_t\,dxd\tau+\int_0^t\int_{\R^2}|\na u|^2\,dxd\tau. \end{split} \eeq
In this subsection, we shall frequently use the following version of
Gagliardo-Nirenberg inequality: \beq \label{eq:GN1}
\|a\|_{L^p(\R^2)}\leq C\|a\|_{L^2(\R^2)}^{\f2p}\|\na
a\|_{L^2(\R^2)}^{1-\f2p}\quad\mbox{for}\quad 2\leq p<\infty. \eeq By
virtue of \eqref{eq:density} and \eqref{eq:GN1}, we obtain \beq
\label{eq:A1-est2}
\begin{split}
&\int_0^t\int_{\R^2}\sigma(\tau)\rho(u\cdot \na u)\cdot u_t\,dxd\tau\\
&\le C\int_0^t\sigma(\tau)\|u(\tau)\|_{L^4}\|\na u(\tau)\|_{L^4}\|\rho^\f12 u_t(\tau)\|_{L^2}\,d\tau\\
&\le C\int_0^t\sigma(\tau)\|u(\tau)\|_{L^2}^\f12\|\na
u(\tau)\|_{L^2}\|\Delta u(\tau)\|_{L^2}^{\f12}
\|\rho^\f12 u_t(\tau)\|_{L^2}\,d\tau\\
&\le C_\delta\int_0^t\sigma(\tau)\|u(\tau)\|_{L^2}\|\na
u(\tau)\|_{L^2}^2\|\Delta u(\tau)\|_{L^2}\,
d\tau+\delta\int_0^t\sigma(\tau)\|\rho^\f12 u_t(\tau)\|_{L^2}^2\,d\tau\\
&\le C_\delta A_0(t)\int_0^t\sigma(\tau)\|\na
u(\tau)\|_{L^2}^4\,d\tau +\delta\int_0^t\sigma(\tau)\|\Delta
u(\tau)\|_{L^2}^2d\tau +\delta \widetilde{A}_1(t),\end{split} \eeq
for any $\delta>0,$ where $C_\d$ is a positive constant so that
$C_\d\to\infty$ as $\d\to 0.$ Whereas thanks to \eqref{eq:InhomoNS},
we write \beq\label{h.2} -\Delta u+\na p=-\rho(u_t+u\cdot\na u),
\eeq which along with   the classical estimate on the Stokes system
ensures that \beno
\begin{split}
\|\na^2 u\|_{L^2}+\|\na p\|_{L^2}\le& C\big(\|\rho u_t\|_{L^2}+\|\rho u\cdot\na u\|_{L^2}\big)\\
\le& C\big(\|\rho u_t\|_{L^2}+\|u\|_{L^2}^\f12\|\na u\|_{L^2}\|\na^2
u\|_{L^2}^\f12\big), \end{split} \eeno  so that
\beq\label{eq:A1-est3} \|\na^2 u\|_{L^2}+\|\na p\|_{L^2}\le
C\bigl(\|\rho^\f12 u_t\|_{L^2}+\|u\|_{L^2}\|\na u\|_{L^2}^2\bigr).
\eeq Substituting \eqref{eq:A1-est3} into \eqref{eq:A1-est2} gives
rise to \beq\label{g.2}
\bigl|\int_0^t\int_{\R^2}\sigma(\tau)\rho(u\cdot \na u)\cdot
u_t\,dxd\tau\bigr|\leq C_\delta A_0(t)\int_0^t\sigma(\tau)\|\na
u(\tau)\|_{L^2}^4\,d\tau  +C\delta \widetilde{A}_1(t). \eeq

Summing up (\ref{eq:A0-est}), \eqref{eq:A1-est1} and
(\ref{eq:A1-est3})-(\ref{g.2}) and taking $\delta$ sufficiently
small, we obtain \beno
\widetilde{A}_1(t)+\int_0^t\int_{\R^2}\sigma(\tau)|\na^2
u|^2\,dx\,d\tau\le C\Bigl(\|u_0\|_{L^2}^2\int_0^t\|\na
u(\tau)\|_{L^2}^2\widetilde{A}_1(\tau)d\tau+\|u_0\|_{L^2}^2\Bigr).
\eeno Applying Gronwall's inequality gives \beq\label{g.3}
\widetilde{A}_1(t)+\int_0^t\int_{\R^2}\sigma(\tau)|\na^2
u|^2\,dx\,d\tau\le C\|u_0\|_{L^2}^2\exp\big\{C\|u_0\|_{L^2}^4\big\}.
\eeq To obtain the estimate of $A_1(t)$, we need to use an
interpolation argument.  For this, we consider the linear momentum
equation \beno \rho(\p_tv+u\cdot\na v)-\Delta v+\na p=0,\quad
v(0,x)=v_0(x). \eeno Then it follows from the same line to the proof
of  (\ref{g.3}) and \eqref{eq:A1-est3} that \beno
\begin{split}
&\int_0^t\int_{\R^2}(\rho|v_t|^2+|\na^2v|^2+|\na
p|^2)\,dxd\tau+\int_{\R^2}|\na v(t,x)|^2\,dx
\le C\|v_0\|_{H^1}^2\exp\big\{C\|u_0\|_{L^2}^4\big\},\\
&\int_0^t\int_{\R^2}\sigma(\tau)(\rho|v_t|^2+|\na^2v|^2+|\na
p|^2)\,dxd\tau+\sigma(t)\int_{\R^2}|\na v(t,x)|^2\,dx \le
C\|v_0\|_{L^2}^2\exp\big\{C\|u_0\|_{L^2}^4\big\}.
\end{split}
\eeno
We define the linear operator $Tv_0=\na v$. The above inequalities tell us that
\beno
&&\|Tv_0\|_{L^2}\le C\exp\big\{C\|u_0\|_{L^2}^4\big\}\|v_0\|_{H^1},\\
&&\|Tv_0\|_{L^2}\le C\sigma(t)^{-\frac 12}\exp\big\{C\|u_0\|_{L^2}^4\big\}\|v_0\|_{L^2},
\eeno
from which and Riesz-Thorin interpolation theorem \cite{Graf}, we infer
\beno
\|\na v\|_{L^2}=\|Tv_0\|_{L^2}\le C\sigma(t)^{\f {-1+s} 2}\exp\big\{C\|u_0\|_{L^2}^4\big\}\|v_0\|_{H^s}.
\eeno
Further,  we define a family of operators $T_zv_0=\sigma(t)^{\frac z 2}\rho^{\f12}\partial_t v(t,x)$ for $\textrm{Re} z\in [0,1]$.
Then we have
\beno
&&\|T_{iy}v_0\|_{L^2(0,t;L^2)}\le C\exp\big\{C\|u_0\|_{L^2}^4\big\}\|v_0\|_{H^1},\\
&&\|T_{1+iy}v_0\|_{L^2(0,t;L^2)}\le C\exp\big\{C\|u_0\|_{L^2}^4\big\}\|v_0\|_{L^2},
\eeno
for any  $y\in \R$. Apply Stein interpolation theorem \cite{Graf} to get
\beno
\big\|\sigma(\tau)^{\f{1-s} 2}\|\rho^\f12v_t\|_{L^2}\big\|_{L^2(0,t)}=\|T_{1-s}v_0\|_{L^2(0,t;L^2)}\le C\exp\big\{C\|u_0\|_{L^2}^4\big\}\|v_0\|_{H^s}.
\eeno
The other terms can be treated in a similar way. Therefore we  have
\beq\label{eq:As-est} A_1(t)\le
C\|u_0\|_{H^s}^2\exp\big\{C\|u_0\|_{L^2}^4\big\}.
\eeq

Finally, we manipulate the $H^2$ energy estimate for $u.$ We first
get, by taking the time derivative to the momentum equation of
(\ref{eq:InhomoNS}), that \beno \rho(u_{tt}+u\cdot\na u_t)-\Delta
u_t+\na p_t=-\rho_t(u_t+u\cdot\na u)-\rho u_t\cdot\na u. \eeno
Taking the $L^2$ inner product of the above equation with $u_t,$
 and then using integration by parts,  we write \beno
 \begin{split}
\f 12&\f d {dt}\int_{\R^2}\rho|u_t|^2\,dx+\int_{\R^2}|\na u_t|^2\,dx\\
&=-\int_{\R^2}\rho_t|u_t|^2\,dx-\int_{\R^2}\rho_t(u\cdot\na u)\cdot
u_t\,dx -\int_{\R^2}\rho (u_t\cdot\na u)\cdot u_t\,dx, \end{split}
\eeno from which, we infer \beq\label{eq:A2-est1}
\begin{split}
\wt{A}_2(t)\eqdefa&\f 12\sigma(t)^{2-s}\int_{\R^2}\rho|u_t|^2\,dx+\int_0^t\sigma(\tau)^{2-s}\int_{\R^2}|\na u_t|^2\,dxd\tau\\
=&-\int_0^t\sigma(\tau)^{2-s}\int_{\R^2}\rho_t|u_t|^2\,dxd\tau-\int_0^t\sigma(\tau)^{2-s}\int_{\R^2}\rho_t(u\cdot\na u)\cdot u_t\,dxd\tau\\
&-\int_0^t\sigma(\tau)^{2-s}\int_{\R^2}\rho (u_t\cdot\na u)\cdot u_t\,dxd\tau+(2-s)\int_0^t\sigma(\tau)^{1-s}\int_{\R^2}\rho|u_t|^2\,dxd\tau\\
\eqdefa& A+B+\frak{C}+D.\end{split} \eeq It is obvious to check that
\beq\label{eq:A2-D} |D|\le (2-s)A_1(t), \eeq and \beq\label{eq:A2-C}
\begin{split}
\frak{C} \le&\int_0^t\sigma(\tau)^{2-s}\|\rho\|_{L^\infty}\|\na u\|_{L^2}\|u_t\|_{L^4}^2\,d\tau\\
\le& C\int_0^t\sigma(\tau)^{2-s}\|\na u\|_{L^2}\|u_t\|_{L^2}\|\na u_t\|_{L^2}\,d\tau\\
\le& C\int_0^t\sigma(\tau)^{2-s}\|\na
u\|_{L^2}^2\|\rho^\f12u_t\|_{L^2}^2\,d\tau+\f14\tilde
A_2(t).\end{split} \eeq

 While noticing that
$\rho_t=-u\cdot\na \rho$, we get by using integration by parts
 and \eqref{eq:GN1} that
\beq\label{eq:A2-A}
\begin{split}
A=&-2\int_0^t\sigma(\tau)^{2-s}\int_{\R^2}\rho u_t\cdot(u\cdot\na u_t)\,dxd\tau\\
\le& 2\int_0^t\sigma(\tau)^{2-s}\|u\|_{L^4}\|\rho\|_{L^\infty}\|\na u_t\|_{L^2}\|u_t\|_{L^4}\,d\tau\\
\le& C\int_0^t\sigma(\tau)^{2-s}\|u\|_{L^2}^\f12\|\na u\|_{L^2}^\f12\|u_t\|_{L^2}^\f12\|\na u_t\|_{L^2}^\f32\,d\tau\\
\le& C\int_0^t\sigma(\tau)^{2-s}\|u\|_{L^2}^2\|\na
u\|_{L^2}^2\|u_t\|_{L^2}^2\,d\tau
+\f14\int_0^t\sigma(\tau)^{2-s}\|\na u_t\|_{L^2}^2\,d\tau\\
\le& C\|u_0\|_{L^2}^2\int_0^t\sigma(\tau)^{2-s}\|\na
u\|_{L^2}^2\|u_t\|_{L^2}^2d\tau+\f14\tilde A_2(t).\end{split} \eeq
Along the same line, we write $B$ as \beno
\begin{split}
B=&\int_0^t\sigma(\tau)^{2-s}\int_{\R^2}\rho(u\cdot\na u)\cdot(u\cdot\na u_t)\,dxd\tau\\
&+\int_0^t\sigma(\tau)^{2-s}\int_{\R^2}\rho ((u\cdot\na u)\cdot\na u)\cdot u_t\,dxd\tau\\
&+\int_0^t\sigma(\tau)^{2-s}\int_{\R^2}\rho ((u\otimes u):\na^2)u\cdot u_t\,dxd\tau\\
\eqdefa& B_1+B_2+B_3.\end{split} \eeno By virtue of H\"{o}lder
inequality and \eqref{eq:GN1}, one has \beno
\begin{split}
B_1\le&\int_0^t\sigma(\tau)^{2-s}\|\rho\|_{L^\infty}\|u\|_{L^8}^2\|\na u\|_{L^4}\|\na u_t\|_{L^2}\,d\tau\\
\le& C\int_0^t\sigma(\tau)^{2-s}\|u\|_{L^2}^\f12\|\na u\|_{L^2}^2\|\Delta u\|_{L^2}^\f12\|\na u_t\|_{L^2}\,d\tau\\
\le& C\int_0^t\sigma(\tau)^{2-s}\|u\|_{L^2}\|\na u\|_{L^2}^4\|\Delta
u\|_{L^2}\,d\tau+\f1 {16}\tilde A_2(t),\end{split} \eeno which along
with (\ref{eq:A1-est3}) implies \beno
\begin{split}
B_1\le&  C\Bigl\{\int_0^t\sigma(\tau)^{2-s}\|\na u\|_{L^2}^2\|\rho^\f12u_t\|_{L^2}^2\,d\tau\\
&+\int_0^t\sigma(\tau)^{2-s}\|u\|_{L^2}^2\|\na u\|_{L^2}^6\,d\tau\Bigr\}+\f1 {16}\tilde A_2(t)\\
\le&  C\int_0^t\sigma(\tau)^{2-s}\|\na
u\|_{L^2}^2\|\rho^\f12u_t\|_{L^2}^2\,d\tau
+C\|u_0\|_{L^2}^4A_1(t)^2+\f1 {16}\tilde A_2(t). \end{split} \eeno
The same argument gives rise to \beno
\begin{split}
B_2\le&\int_0^t\sigma(\tau)^{2-s}\|\rho\|_{L^\infty}^\f12\|u\|_{L^4}\|\na u\|_{L^8}^2\|\rho^\f12u_t\|_{L^2}\,d\tau\\
\le& C\int_0^t\sigma(\tau)^{2-s}\|u\|_{L^2}^\f12\|\na u\|_{L^2}\|\Delta u\|_{L^2}^\f32\|\rho^\f12 u_t\|_{L^2}\,d\tau\\
\le& C\int_0^t\sigma(\tau)^{2-s}\|u\|_{L^2}\|\na
u\|_{L^2}^2\|\rho^\f12 u_t\|_{L^2}^2\,d\tau
+\int_0^t\sigma(\tau)^{2-s}\|\Delta u\|_{L^2}^3\,d\tau\\
\le& C\int_0^t\sigma(\tau)^{2-s}\|u\|_{L^2}\|\na
u\|_{L^2}^2\|\rho^\f12 u_t\|_{L^2}^2\,d\tau
+C\int_0^t\sigma(\tau)^{2-s}\big(\|\rho^\f12u_t\|_{L^2}^3+\|u\|_{L^2}^3\|\na u\|_{L^2}^6\big)\,d\tau\\
\le& C\Bigl(\|u_0\|_{L^2}\int_0^t\sigma(\tau)^{2-s}\|\na
u\|_{L^2}^2\|\rho^\f12 u_t\|_{L^2}^2\,d\tau
+A_1(t)^2+\|u_0\|_{L^2}^5A_1(t)^2\Bigr)+\f1{16}\tilde A_2(t),
\end{split} \eeno and \beno\begin{split}
B_3\le&\int_0^t\sigma(\tau)^{2-s}\|\rho\|_{L^\infty}^\f12\|u\|_{L^\infty}^2\|\na^2 u\|_{L^2}\|\rho^\f12u_t\|_{L^2}\,d\tau\\
\le& C\int_0^t\sigma(\tau)^{2-s}\|u\|_{L^2}\|\Delta u\|_{L^2}^2\|\rho^\f12u_t\|_{L^2}\,d\tau\\
\le&
C\int_0^t\sigma(\tau)^{2-s}\|u\|_{L^2}\|\rho^\f12u_t\|_{L^2}^3\,d\tau
+\int_0^t\sigma(\tau)^{2-s}\|u\|_{L^2}^3\|\na u\|_{L^2}^4\|\rho^\f12u_t\|_{L^2}\,d\tau\\
\le&
C\Bigl(\|u_0\|_{L^2}^2A_1(t)^2+\|u_0\|_{L^2}^8A_1(t)^2+\int_0^t\sigma(\tau)^{2-s}\|\na
u\|_{L^2}^2\|\rho^\f12 u_t\|_{L^2}^2\,d\tau\Bigr) +\f1{16}\tilde
A_2(t).\end{split} \eeno Summing up the above estimates, we conclude
that \beq\label{eq:A2-B} \begin{split}
B\le& C\big(1+\|u_0\|_{L^2}\big)\int_0^t\sigma(\tau)^{2-s}\|\na u\|_{L^2}^2\|\rho^\f12 u_t\|_{L^2}^2\,d\tau\\
&+C\big(1+\|u_0\|_{L^2}^8\big)A_1(t)^2+\f 3 {16}\tilde A_2(t).
\end{split}\eeq

Combining (\ref{eq:A2-est1}) with (\ref{eq:A2-D})-(\ref{eq:A2-B}), we obtain
\beno
\begin{split}
\wt{A}_2(t)\le& C\Bigl\{\big(1+\|u_0\|_{L^2}^2\big)\int_0^t\sigma(\tau)^{2-s}\|\na u\|_{L^2}^2\|\rho^\f12 u_t\|_{L^2}^2\,d\tau\\
&\quad+\big(1+\|u_0\|_{L^2}^8\big)A_1^2(t)+A_1(t)\Bigr\}.
\end{split}
\eeno
Applying Gronwall's inequality and (\ref{eq:As-est}) leads to
\beno
\widetilde{A}_2(t)\le
C\big(1+\|u_0\|_{L^2}^8\big)(\|u_0\|_{H^s}^2+\|u_0\|_{H^s}^4)\exp\big\{C\|u_0\|_{L^2}^4\big\},
\eeno which together with \eqref{eq:A1-est3} ensures that
\beno
\begin{split}
A_2(t)\leq &C\bigl(\wt{A}_2(t)+\|u_0\|_{L^2}^2A_1^2(t)\bigr)\\
\le&
C\big(1+\|u_0\|_{L^2}^8\big)(\|u_0\|_{H^s}^2+\|u_0\|_{H^s}^4)\exp\big\{C\|u_0\|_{L^2}^4\big\},
\end{split}
\eeno This together with \eqref{eq:A0-est} and \eqref{eq:As-est}
completes the proof of \eqref{g.1}.
\end{proof}

\subsection{Existence of the solution in 3-D}

\begin{proof}[Proof to the existence part of Theorem \ref{thm:existence-3D}]
By mollifying the initial density $\rho_0,$ we deduce from
\cite{AGZ2} that (\ref{eq:InhomoNS}) has a unique global solution
$(\rho^\e, u^\e)$ provided that $\e_0$ is small enough in
\eqref{ass:small}. Then the existence part of Theorem
\ref{thm:existence-3D} follows from the uniform estimate \eqref{g.4}
for $(\rho^\e, u^\e)$ and a standard compactness argument. For
simplicity, we only present the {\it a priori}  estimates
\eqref{g.4} for smooth enough solutions $(\rho, u)$ of
(\ref{eq:InhomoNS}).  The uniqueness of such solution will be proved
in Section 3. As a convention in the rest of this section,  we shall
always denote by $C$ a constant depending on $C_0$ in \eqref{g.5},
which may be different from line to line.

First of all, it is easy to check from  (\ref{eq:InhomoNS}) and
\eqref{g.5}
 that \beq\label{eq:A03-est1}\begin{split}
&c_0\le \rho(t,x)\le C_0\quad \textrm{for} \quad (t,x)\in [0,+\infty)\times \R^3,\\
&\int_{\R^3}\rho|u(t,x)|^2\,dx+2\int_0^T\int_{\R^3}|\na u|^2\,dx
dt=\int_{\R^3}\rho_0|u_0|^2\,dx. \end{split} \eeq

While we get by taking the $L^2$ inner product of the momentum
equation to (\ref{eq:InhomoNS}) and $u_t$ that
\beq\label{eq:A13-est1}
2\int_0^t\int_{\R^3}\rho|u_t|^2\,dxd\tau+\int_{\R^3}|\na
u(t,x)|^2\,dx \le \|\na u_0\|_{L^2}^2-\int_0^t\int_{\R^3}\rho(u\cdot
\na u)\cdot u_t\,dxd\tau. \eeq In what follows, we need to use
Gagliardo-Nirenberg inequality \beq\label{eq:GN2}
\|a\|_{L^p(\R^3)}\leq C\|a\|_{L^2(\R^3)}^{\f3p-\f12}\|\na
a\|_{L^2(\R^3)}^{\f32-\f3p}\quad\mbox{for}\quad 2\leq p\leq 6. \eeq
By virtue of \eqref{eq:GN2}, one has
\beno\label{eq:A13-est2}\begin{split}
\int_0^t\int_{\R^3}\rho(u\cdot \na u)\cdot u_t\,dxd\tau\le& \int_0^t\|\rho\|_{L^\infty}^\f12\|u\|_{L^6}\|\na u\|_{L^3}\|\rho^\f12u_t\|_{L^2}\,d\tau\nonumber\\
\le& C\int_0^t\|\na u\|_{L^2}^\f32\|\Delta u\|_{L^2}^\f12\|\rho^\f12u_t\|_{L^2}\,d\tau\nonumber\\
\le& C\int_0^t\|\na u\|_{L^2}^3\|\Delta
u\|_{L^2}\,d\tau+\f14\int_0^t\|\rho^\f12u_t\|_{L^2}^2\,d\tau.
\end{split} \eeno Whereas it follows from the momentum equation of
(\ref{eq:InhomoNS}) and classical estimates on the Stokes system
that \beq\label{eq:A13-est3}\begin{split}
\|\na^2 u\|_{L^2}+\|\na p\|_{L^2}\le& C\big(\|\rho^\f12u_t\|_{L^2}+\|u\cdot\na u\|_{L^2}\big)\\
\le& C\big(\|\rho^\f12u_t\|_{L^2}+\|\na u\|_{L^2}^3\big)+\f12\|\na
^2u\|_{L^2}, \end{split} \eeq  so that \beno
\int_0^t\int_{\R^3}\rho(u\cdot \na u)\cdot u_t\,dxd\tau\le
C\int_0^t\|\na
u\|_{L^2}^6\,d\tau+\f12\int_0^t\|\rho^\f12u_t\|_{L^2}^2\,d\tau,
\eeno from which, (\ref{eq:A13-est1}) and (\ref{eq:A13-est3}), we
infer  \beq\label{B1t} B_1(t)\le \|\na
u_0\|_{L^2}^2+C\|u_0\|_{L^2}^2{B}_1(t)^2. \eeq Hence, as long as we
choose $\varepsilon_0$ small enough in \eqref{g.4}, we obtain the
estimate for $B_1(t)$ in \eqref{g.4}.

We now turn to the estimate of $B_2(t).$ Indeed along the same line
to the proof of  (\ref{eq:A2-est1}), we  have
\beq\label{eq:A23-est1}
\begin{split}
&\f 12\sigma(t)\int_{\R^3}\rho|u_t|^2\,dx+\int_0^t\sigma(\tau)\int_{\R^3}|\na u_t|^2\,dxd\tau\\
&=-\int_0^t\sigma(\tau)\int_{\R^3}\rho_t|u_t|^2\,dxd\tau-\int_0^t\sigma(\tau)\int_{\R^3}\rho_t(u\cdot\na u)\cdot u_t\,dxd\tau\\
&\quad-\int_0^t\sigma(\tau)\int_{\R^3}\rho (u_t\cdot\na u)\cdot u_t\,dxd\tau+\int_0^t\int_{\R^3}\rho|u_t|^2\,dxd\tau\\
&\eqdefa E+F+G+H. \end{split} \eeq It is  obvious to observe that
\beq\label{g.6}  H\le B_1(t)\le 2\|\na u_0\|_{L^2}^2. \eeq Whereas
using $\rho_t=-u\cdot\na \rho$ and integrating by parts, we get, by
applying \eqref{eq:GN2}, that \beq\label{eq:A23-A}
\begin{split}
E=&-2\int_0^t\sigma(\tau)\int_{\R^3}\rho(u\cdot\na u_t)\cdot u_t\,dxd\tau\\
\le& 2\int_0^t\sigma(\tau)\|u\|_{L^\infty}\|\rho\|_{L^\infty}^\f12\|\na u_t\|_{L^2}\|\rho^\f12u_t\|_{L^2}\,d\tau\\
\le& C\int_0^t\sigma(\tau)\|\na u\|_{L^2}^\f12\|\na^2 u\|_{L^2}^\f12\|\na u_t\|_{L^2}\|\rho^\f12u_t\|_{L^2}\,d\tau\\
\le& C\int_0^t\bigl(\|\na u\|_{L^2}^2+\|\na^2
u\|_{L^2}^2\bigr)\sigma(\tau)\|\rho^\f12u_t\|_{L^2}^2\,d\tau
+\f14\int_0^t\sigma(\tau)\|\na u_t\|_{L^2}^2\,d\tau.
\end{split} \eeq Notice that \beno\label{eq:A23-C}
\begin{split}
G \le&\int_0^t\sigma(\tau)\|\rho\|_{L^\infty}^\f12\|\na u\|_{L^3}\|u_t\|_{L^6}\|\rho^\f12 u_t\|_{L^2}\,d\tau\nonumber\\
\le& C\int_0^t\sigma(\tau)\|\na u\|_{L^2}^\f12\|\na^2
u\|_{L^2}^\f12\|\na u_t\|_{L^2}\|\rho^\f12 u_t\|_{L^2}\,d\tau.
\end{split} \eeno \eqref{eq:A23-A} holds also  for $G.$ To deal with  $F$ in \eqref{eq:A23-est1}, we get, by   using
$\rho_t=-u\cdot\na \rho$ once again and integrating by parts, to
write \beno
\begin{split}
F=&\int_0^t\sigma(\tau)\int_{\R^3}\rho(u\cdot\na u)\cdot(u\cdot\na u_t)\,dxd\tau\\
&+\int_0^t\sigma(\tau)\int_{\R^3}\rho ((u\cdot\na u)\cdot\na u)\cdot u_t\,dxd\tau\\
&+\int_0^t\sigma(\tau)\int_{\R^3}\rho ((u\otimes u):\na^2)u\cdot
u_t\,dxd\tau \eqdefa F_1+F_2+F_3. \end{split} \eeno By virtue of
\eqref{g.4} for $B_1(t)$ and \eqref{eq:GN2} , one has \beno
\begin{split}
F_1\le&\int_0^t\sigma(\tau)\|\rho\|_{L^\infty}\|u\|_{L^6}^2\|\na u\|_{L^6}\|\na u_t\|_{L^2}\,d\tau\\
\le& C\int_0^t\sigma(\tau)\|\na u\|_{L^2}^2\|\na^2 u\|_{L^2}\|\na u_t\|_{L^2}\,d\tau\\
\le& C\int_0^t\sigma(\tau)\|\na u\|_{L^2}^4\|\na^2
u\|_{L^2}^2\,d\tau+\f1 {4}B_2(t)\le C\|\na u_0\|_{L^2}^6+\f1
{4}B_2(t).\end{split} \eeno Along the same line, we have \beno
\begin{split}
F_2\le&\int_0^t\sigma(\tau)\|\rho\|_{L^\infty}^\f12\|u\|_{L^6}\|\na u\|_{L^6}^2\|\rho^\f12u_t\|_{L^2}\,d\tau\\
\le& C\int_0^t\sigma(\tau)\|\na u\|_{L^2}\|\na^2 u\|_{L^2}^2\|\rho^\f12 u_t\|_{L^2}\,d\tau\\
\leq&  C\int_0^t\bigl(\|\na u\|_{L^2}^2+\|\na^2
u\|_{L^2}^2\bigr)\sigma(\tau)\bigl(\|\rho^\f12 u_t\|_{L^2}^2+\|\na^2
u\|_{L^2}^2\bigr)\,d\tau.
\end{split} \eeno The same estimate holds for $F_3,$ as \beno
\begin{split}
F_3\le&\int_0^t\sigma(\tau)\|\rho\|_{L^\infty}^\f12\|u\|_{L^\infty}^2\|\na^2 u\|_{L^2}\|\rho^\f12u_t\|_{L^2}\,d\tau\\
\le& C\int_0^t\sigma(\tau)\|\na u\|_{L^2}\|\na^2
u\|_{L^2}^2\|\rho^\f12u_t\|_{L^2}\,d\tau. \end{split} \eeno
Therefore,  we conclude that \beq\label{eq:A23-B}
\begin{split} F\le& C\int_0^t\bigl(\|\na u\|_{L^2}^2+\|\na^2
u\|_{L^2}^2\bigr)\sigma(\tau)\bigl(\|\rho^\f12 u_t\|_{L^2}^2+\|\na^2
u\|_{L^2}^2\bigr)\,d\tau+C\|\na u_0\|_{L^2}^6+\f 1 {4}B_2(t).
\end{split}\eeq

On the other hand, notice from \eqref{eq:A13-est3} that \beno
\s(t)\big(\|\na^2u(t)\|_{L^2}^2+\|\na p\|_{L^2}^2\big)\leq
C\s(t)\bigl(\|\r^{\f12}u_t(t)\|_{L^2}^2+\|\na
u(t)\|_{L^2}^6\bigr)\leq C\bigl(B_2(t)+\|\na u_0\|_{L^2}^6\bigr),
\eeno which together with (\ref{eq:A23-est1})-(\ref{eq:A23-B})
ensures that \beno B_2(t)\leq C\Bigl\{\int_0^t\bigl(\|\na
u\|_{L^2}^2+\|\na^2 u\|_{L^2}^2\bigr)\sigma(\tau)\bigl(\|\rho^\f12
u_t\|_{L^2}^2+\|\na^2 u\|_{L^2}^2\bigr)\,d\tau+\|\na
u_0\|_{L^2}^6+\|\na u_0\|_{L^2}^2\Bigr\},
\eeno
applying Gronwall's inequality gives rise to the estimate of $B_2(t)$ in \eqref{g.4}.
This completes the proof of \eqref{g.4}.
\end{proof}

\begin{rmk}\label{rmk2.1} Along the same line to the derivation of
\eqref{B1t}, we also get \beno
\begin{split}
\f12\f{d}{dt}\|\na
u(t)\|_{L^2}^2+\int_{\R^3}\r|u_t|^2\,dx=&-\int_{\R^3}\r (u\cdot\na
u)\cdot u_t\,dx\\
\leq & C\|\na u\|_{L^2}^6+\f12\int_{\R^3}\r|u_t|^2\,dx,
\end{split}
\eeno which gives \beq\label{large} \f{d}{dt}\|\na
u(t)\|_{L^2}^2+\int_{\R^3}\r|u_t|^2\,dx\leq C\|\na u\|_{L^2}^6. \eeq
Hence if the initial velocity $u_0$ does not satisfy
\eqref{ass:small}, we deduce from \eqref{large} that there exists a
positive time $\frak{T}$ so that \beno \|\na
u\|_{L^\infty_{\frak{T}}(L^2)}^2+\|u_t\|_{L^2_{\frak{T}}(L^2)}^2\leq
C\|\na u_0\|_{L^2}^2. \eeno With the above estimate, we can obtain
the estimate $B_1(t)$ and $B_2(t)$ for $t\leq \frak{T}$ as we did
before. This implies the local existence of solutions to
\eqref{eq:InhomoNS} in 3-D with large data.
\end{rmk}

\smallskip

\renewcommand{\theequation}{\thesection.\arabic{equation}}
\setcounter{equation}{0}
\section{Uniqueness of the solution}\label{sect2.3}

\subsection{More regularity of the solutions}
Before we present the proof to the uniqueness part of both Theorem
\ref{thm:existenc-2D} and Theorem \ref{thm:existence-3D}, we need
the following regularity results for the solutions of
\eqref{eq:InhomoNS} obtained in Section \ref{sect2}.

\begin{lem}\label{lem3.1}
{\sl Let $(\r, u, \na p)$ be the global solution of
\eqref{eq:InhomoNS} obtained in Theorem \ref{thm:existence-3D}. Then
for any $T\in\R^+$, one has \beq\label{h.1}
\begin{split}
&\int_0^T\s(t)\bigl(\|\D u(t)\|_{L^6}^2+\|\na
p(t)\|_{L^6}^2\bigr)\,dt\leq C,\\
&\int_0^T\|\na u(t)\|_{L^\infty}\,dt\leq C\max\bigl(T^{\f14},
T^{\f12}\bigr),\\
&\int_0^T\s(t)^{\f12}\|\na u(t)\|_{L^\infty}^2\,dt\leq C,
\end{split}
\eeq for some constant $C$ depending only on $c_0, C_0$ in
\eqref{g.5} and $\|u_0\|_{H^1}.$}
\end{lem}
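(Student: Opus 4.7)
The plan is to derive all three bounds as consequences of the a priori energy estimates $B_1(t)$ and $B_2(t)$ of Theorem~\ref{thm:existence-3D}, combined with the classical $L^6$ Stokes estimate applied to the momentum equation and with Gagliardo--Nirenberg interpolation in $\R^3$. Rewriting \eqref{eq:InhomoNS} as a stationary Stokes system forced by $-\r(u_t+u\cdot\na u)$ gives
\beno
\|\na^2 u\|_{L^6}+\|\na p\|_{L^6}\leq C\bigl(\|u_t\|_{L^6}+\|u\|_{L^\infty}\|\na u\|_{L^6}\bigr)\leq C\bigl(\|\na u_t\|_{L^2}+\|u\|_{L^\infty}\|\na u\|_{L^6}\bigr),
\eeno
where the Sobolev embedding $\dot H^1(\R^3)\hookrightarrow L^6$ has been used to handle $\|u_t\|_{L^6}$.

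For the first assertion of \eqref{h.1}, the Gagliardo--Nirenberg bounds $\|u\|_{L^\infty}\leq C\|\na u\|_{L^2}^{1/2}\|\na^2u\|_{L^2}^{1/2}$ and $\|\na u\|_{L^6}\leq C\|\na^2u\|_{L^2}$ in $\R^3$ reduce the task to controlling $\int_0^T\s(t)\|\na u\|_{L^2}\|\na^2u\|_{L^2}^3\,dt$ by a constant independent of $T$ (the $\|\na u_t\|_{L^2}^2$ contribution is handled directly by $B_2(t)$). The key observation is that the cubic factor splits as $\s\|\na^2u\|_{L^2}^3=(\s^{1/2}\|\na^2u\|_{L^2})(\s^{1/2}\|\na^2u\|_{L^2}^2)$; using the pointwise-in-time bound $\s^{1/2}\|\na^2u\|_{L^2}\leq C$ from $B_2(t)$ and $\|\na u\|_{L^2}\leq C$ from $B_1(t)$, the integrand reduces to $C\|\na^2u\|_{L^2}^2$, whose $L^1_t(\R^+)$-norm is controlled by $B_1(t)$.

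For the second assertion, another round of Gagliardo--Nirenberg yields $\|\na u\|_{L^\infty}\leq C\|\na u\|_{L^6}^{1/2}\|\na^2u\|_{L^6}^{1/2}\leq C\|\na^2u\|_{L^2}^{1/2}\|\na^2u\|_{L^6}^{1/2}$. Introducing the weight splitting $\s^{-1/4}\cdot\s^{1/4}$ and applying H\"older in time with exponents $(2,4,4)$ produces the three factors
\beno
\Bigl(\int_0^T\s^{-1/2}\,dt\Bigr)^{1/2},\qquad \Bigl(\int_0^T\|\na^2u\|_{L^2}^2\,dt\Bigr)^{1/4},\qquad \Bigl(\int_0^T\s\|\na^2u\|_{L^6}^2\,dt\Bigr)^{1/4};
\eeno
the last two are bounded independently of $T$ by $B_1(t)$ and by the first assertion already proved, whereas the first one equals $\sqrt{2}\,T^{1/4}$ for $T\leq 1$ and is of order $T^{1/2}$ for $T\geq 1$, delivering precisely $C\max(T^{1/4},T^{1/2})$. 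The third assertion follows even more directly from $\|\na u\|_{L^\infty}^2\leq C\|\na^2u\|_{L^2}\|\na^2u\|_{L^6}$ and Cauchy--Schwarz, since the resulting product $\bigl\|\na^2u\bigr\|_{L^2_T(L^2)}\cdot\bigl\|\s^{1/2}\na^2u\bigr\|_{L^2_T(L^6)}$ is bounded uniformly in $T$. The main technical obstacle is really the first assertion: one must ensure that neither the $\s$-weight nor the time integration introduces any growth in $T$, which forces the splitting of $\s\|\na^2u\|_{L^2}^3$ described above; once that reorganisation is in place, (ii) and (iii) reduce to routine H\"older manipulations.
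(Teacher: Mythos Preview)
Your proof is correct and follows essentially the same route as the paper: you use the $L^6$ Stokes estimate on $-\Delta u+\nabla p=-\rho(u_t+u\cdot\nabla u)$, bound $\|u\cdot\nabla u\|_{L^6}\leq C\|\nabla u\|_{L^2}^{1/2}\|\nabla^2u\|_{L^2}^{3/2}$ via Gagliardo--Nirenberg, and split $\sigma\|\nabla^2u\|_{L^2}^3$ so that the sup-in-time bound from $B_2$ absorbs one factor while $B_1$ controls the remaining $\|\nabla^2u\|_{L^2}^2$ integral; the paper writes this splitting as $\sup_t\bigl(\|\nabla u\|_{L^2}\,\sigma\|\Delta u\|_{L^2}\bigr)\cdot\int\|\Delta u\|_{L^2}^2$, which is the same reorganisation, and your H\"older-in-time arguments for the second and third assertions coincide with the paper's (the paper only writes out the second and says the third follows ``along the same line'').
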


\begin{proof} We first get by taking $\dive$ to \eqref{h.2} that
\beno \na p=\na(-\D)^{-1}\dive\bigl\{\r(\p_tu+u\cdot\na u)\bigr\},
\eeno which along with \eqref{g.4} and \eqref{h.2} ensures that \beq
\label{h.3} \|\D u(t)\|_{L^q}+\|\na p(t)\|_{L^q}\leq
C\|(\p_tu+u\cdot\na u)(t)\|_{L^q}\quad\mbox{for any}\ \ q\in
(1,\infty). \eeq However, it follows from Sobolev imbedding theorem
that \beno \|(u\cdot\na u)(t)\|_{L^6}\leq \|u(t)\|_{L^\infty}\|\D
u(t)\|_{L^2}\leq C\|\na u(t)\|_{L^2}^{\f12}\|\D
u(t)\|_{L^2}^{\f32},\eeno from which, \eqref{g.4} and \eqref{h.3},
we infer that
\beno
\begin{split}
&\int_0^T\s(t)\bigl(\|\D u(t)\|_{L^6}^2+\|\na
p\|_{L^6}^2\bigr)\,dt\\
&\leq C\Bigl\{\int_0^T\s(t)\|\na
u_t\|_{L^2}^2\,dt+\sup_{t\in[0,T]}\bigl(\|\na u(t)\|_{L^2}\s(t)\|\D
u(t)\|_{L^2}\bigr)\int_0^T\|\D u(t)\|_{L^2}^2\,dt\Bigr\}\leq C.
\end{split}
\eeno This proves the first part of \eqref{h.1}. Then we deduce from
it, Gagliardo-Nirenberg inequality and \eqref{g.4} that \beno
\begin{split}
\int_0^T\|\na u(t)\|_{L^\infty}\,dt\leq& C\int_0^T\|\D
u\|_{L^2}^{\f12}\|\D u\|_{L^6}^{\f12}\,dt\\
\leq &\|\D u\|_{L^2_T(L^2)}^{\f12}\Bigl(\int_0^T\s(t)\|\D
u(t)\|_{L^6}^2\,dt\Bigr)^{\f14}\Bigl(\int_0^T\s(t)^{-\f12}\,dt\Bigr)^{\f12}\\
\leq &C \Bigl(\int_0^T\s(t)^{-\f12}\,dt\Bigr)^{\f12}\leq
C\max\bigl(T^{\f14}, T^{\f12}\bigr).
\end{split}
\eeno
Along the same line, we can also prove the estimate for
$\int_0^T\s(t)^{\f12}\|\na u(t)\|_{L^\infty}^2\,dt.$
\end{proof}

The 2-D version of the above lemma is more complicated, which we
present as follows.

\begin{lem}\label{lem3.2}
{\sl  Let $(\r, u, \na p)$ be the global solution of
\eqref{eq:InhomoNS} obtained in Theorem \ref{thm:existenc-2D}. Then
for any $T\in\R^+$ and $\al\in [0,1)$, one has \beq\begin{split}
&\int_0^T\s(t)^{1+\al-s}\bigl(\|(\p_t u,\D
u)(t)\|_{L^{\f2{1-\al}}}^2+\|\na
p(t)\|_{L^{\f2{1-\al}}}^2\bigr)\,dt\leq
C,\\
&\int_0^T\|\na u(t)\|_{L^\infty}dt\leq C\max\bigl(T^{\f{s}{2(1+\al)}},
T^{\f12}\bigr),\\
&\int_0^T\s(t)^{1-\frac s {1+\al}}\|\na u(t)\|_{L^\infty}^2dt\leq C,
\end{split} \label{h.8} \eeq
where the constant  $C$ depends on $\al,$ $c_0, C_0$ in \eqref{h.7} and $\|u_0\|_{H^s}.$}
\end{lem}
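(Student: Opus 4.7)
The plan is to mimic the proof of Lemma~\ref{lem3.1} in 2-D, replacing 3-D Sobolev/Gagliardo--Nirenberg by their 2-D counterparts and inserting the weight $\sigma(t)^{1+\alpha-s}$ in place of $\sigma(t)$ to compensate for the weaker initial regularity ($u_0\in H^s$ instead of $H^1$). Throughout set $q:=2/(1-\alpha)\in[2,\infty)$.

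First I would derive the Stokes $L^q$-estimate. Writing $-\Delta u+\nabla p=-\rho(\partial_t u+u\cdot\nabla u)$ and invoking $L^q$-regularity for the stationary Stokes system (justified by $\rho\in L^\infty$ bounded away from zero) yields $\|\Delta u\|_{L^q}+\|\nabla p\|_{L^q}\le C(\|\partial_t u\|_{L^q}+\|u\cdot\nabla u\|_{L^q})$. Thus the first line of \eqref{h.8} reduces to bounding $\int_0^T\sigma^{1+\alpha-s}(\|\partial_t u\|_{L^q}^2+\|u\cdot\nabla u\|_{L^q}^2)\,dt$. For the $\partial_t u$ term, 2-D Gagliardo--Nirenberg gives $\|\partial_t u\|_{L^q}\le C\|\partial_t u\|_{L^2}^{1-\alpha}\|\nabla\partial_t u\|_{L^2}^{\alpha}$. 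The key algebraic identity $\sigma^{1+\alpha-s}=(\sigma^{1-s})^{1-\alpha}(\sigma^{2-s})^{\alpha}$ combined with Young's inequality bounds the integrand by a convex combination of $\sigma^{1-s}\|\partial_t u\|_{L^2}^2$ and $\sigma^{2-s}\|\nabla\partial_t u\|_{L^2}^2$, both of which are time-integrable by $A_1(T)$ and $A_2(T)$ in Theorem~\ref{thm:existenc-2D}. For the convective term, two applications of 2-D Gagliardo--Nirenberg to $\|u\|_{L^{2q}}$ and $\|\nabla u\|_{L^{2q}}$ give $\|u\cdot\nabla u\|_{L^q}^2\le C\|u\|_{L^2}^{1-\alpha}\|\nabla u\|_{L^2}^2\|\nabla^2 u\|_{L^2}^{1+\alpha}$. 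Absorbing $\|\nabla u\|_{L^2}^2\le C\sigma^{s-1}$ (from the supremum part of $A_1$) into the weight leaves $\sigma^{\alpha}\|\nabla^2 u\|_{L^2}^{1+\alpha}$, and one more H\"older step in time against $\int\sigma^{1-s}\|\nabla^2 u\|_{L^2}^2\,dt\le C$ closes the estimate.

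For the $L^\infty$-bound on $\nabla u$, I would use the 2-D Gagliardo--Nirenberg inequality $\|\nabla u\|_{L^\infty}\le C\|\nabla u\|_{L^2}^{\alpha/(1+\alpha)}\|\nabla^2 u\|_{L^q}^{1/(1+\alpha)}$ for $\alpha>0$. Substituting $\|\nabla u\|_{L^2}\le C\sigma^{(s-1)/2}$ and applying H\"older in time with exponent $2(1+\alpha)$ against the first line of \eqref{h.8} just proved yields the weighted square estimate $\int\sigma^{1-s/(1+\alpha)}\|\nabla u\|_{L^\infty}^2\,dt\le C$. The $L^1$-in-time bound follows by a further H\"older step: computing the residual exponent of $\sigma$ produces the factor $T^{s/(2(1+\alpha))}$ for $T\le 1$, while for $T\ge 1$ one splits the integral at $t=1$ and combines Cauchy--Schwarz with the uniform bound $\int_1^T\|\nabla u\|_{L^2}^2\,dt\le C\|u_0\|_{L^2}^2$ from $A_0$ (together with the first estimate of \eqref{h.8}) to produce the $T^{1/2}$ factor.

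The main obstacle is the weight bookkeeping: in each H\"older application one must verify that the residual exponent of $\sigma$ strictly exceeds $-1$, so that the time integral converges near $t=0$. This is precisely where the hypothesis $s>0$ is essential, providing the strict slack $s(1+\alpha)/(1-\alpha)>0$. The parameter $\alpha\in[0,1)$ acts as a tuning knob trading spatial integrability against time regularity; the degenerate corner $\alpha=0$ reduces to the direct $L^2$-in-space Stokes estimate already encoded in $A_1$, while $\alpha$ close to $1$ gives high spatial integrability at the cost of a more singular $\sigma$-weight at $t=0$.
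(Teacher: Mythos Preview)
Your approach is essentially the paper's: reduce via the $L^q$ Stokes estimate \eqref{h.3}, interpolate $\partial_t u$ between $L^2$ and $\dot H^1$ with the weight identity $\sigma^{1+\alpha-s}=(\sigma^{1-s})^{1-\alpha}(\sigma^{2-s})^\alpha$, and use the 2-D Gagliardo--Nirenberg inequality $\|\nabla u\|_{L^\infty}\le C\|\nabla u\|_{L^2}^{\alpha/(1+\alpha)}\|\Delta u\|_{L^{2/(1-\alpha)}}^{1/(1+\alpha)}$ for the $L^\infty$ bound. For the convective term the paper writes $\|u\cdot\nabla u\|_{L^{2/(1-\alpha)}}\le\|u\|_{L^{2/(1-\beta)}}\|\nabla u\|_{L^{2/(1-\gamma)}}$ with $\beta+\gamma=1+\alpha$; your $L^{2q}\times L^{2q}$ split is the special case $\beta=\gamma=(1+\alpha)/2$.

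There is, however, one genuine slip in the execution. When you ``absorb $\|\nabla u\|_{L^2}^2\le C\sigma^{s-1}$ into the weight'' for the convective term, and again when you substitute this pointwise bound in the $L^\infty$ estimates, the residual H\"older factor becomes $\bigl(\int_0^T\sigma(t)^{-1+s(1+\alpha)/(1-\alpha)}\,dt\bigr)^{(1-\alpha)/2}$, respectively $\bigl(\int_0^T\sigma(t)^{s-1}\,dt\bigr)^{\alpha/(1+\alpha)}$. These integrals are finite near $t=0$ (this is where $s>0$ is used, as you note), but since $\sigma(t)=1$ for $t\ge1$ they grow linearly in $T$, so the constants you obtain in the first and third lines of \eqref{h.8} depend on $T$, contrary to the lemma's statement. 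The fix is \emph{not} to replace $\|\nabla u\|_{L^2}$ pointwise; instead, keep the factor $\|\nabla u\|_{L^2}^{1-\alpha}$ (resp.\ $\|\nabla u\|_{L^2}^{2\alpha/(1+\alpha)}$) in the integrand and place it in $L^{2/(1-\alpha)}_t$ (resp.\ $L^{(1+\alpha)/\alpha}_t$) via the energy bound $\int_0^T\|\nabla u\|_{L^2}^2\,dt\le A_0(T)\le C\|u_0\|_{L^2}^2$. This is exactly how the paper proceeds, and it yields $T$-independent constants with no extra work. Your treatment of the second line of \eqref{h.8} via splitting at $t=1$ is a valid alternative to the paper's single three-factor H\"older step.
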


\begin{proof} Fist of all, for any $\beta\in (0,1),$ we deduce from Gagliardo-Nirenberg
inequality that \beno \|a\|_{L^{\f2{1-\beta}}}\leq
C\|a\|_{L^2}^{1-\beta}\|\na a\|_{L^2}^\beta\quad \mbox{for any}\ \
a\in H^1(\R^2), \eeno which along with \eqref{g.1} ensures that for
any $\al, \beta, \gamma\in (0,1)$ satisfying $\beta+\gamma=1+\al$
\beq\label{h.10}
\begin{split}
\bigl\|&\s(t)^{\f{1-s}2(1+\al)}\|(\r u\cdot\na
u)(t)\|_{L^{\f2{1-\al}}}\bigr\|_{L^2_T}\\
&\leq C_0\bigl\|\s(t)^{\f{1-s}2(1+\al)}\|u(t)\|_{L^{\f2{1-\beta}}}\|\na
u(t)\|_{L^{\f2{1-\gamma}}}\bigr\|_{L^2_T}\\
  &\leq C_0\sup_{t\in
[0,T]}\bigl(\|u(t)\|_{L^2}^{1-\beta}\s(t)^{\f{1-s}2\beta}\|\na
u(t)\|_{L^2}^{\beta}\bigr)\bigl\|\|\na
u(t)\|_{L^2}^{1-\gamma}\s(t)^{\f{1-s}2\gamma}\|\na^2
u(t)\|_{L^2}^{\gamma}\bigr\|_{L^2_T}\\
&\leq C(c_0,C_0,\|u_0\|_{H^s}).
\end{split}
\eeq Along the same line, we obtain the same estimate for
$\bigl\|\s(t)^{\f{1+\al-s}2}\|\p_tu\|_{L^{\f2{1-\al}}}\bigr\|_{L^2_T}.$
On the other hand, it follows from  \eqref{h.3}  that \beno \|\D
u(t)\|_{L^{\f2{1-\al}}}+\|\na p(t)\|_{L^{\f2{1-\al}}}\leq
\|(u_t+u\cdot\na u)(t)\|_{L^{\f2{1-\al}}},\eeno from which and
\eqref{h.10}, we obtain the first inequality of \eqref{h.8}.

Whereas we get, by using Gagliardo-Nirenberg inequality once again,
that \beno
\begin{split}
\int_0^T\|\na u(t)\|_{L^\infty}\,dt\leq & C\int_0^T\|\na
u(t)\|_{L^2}^{\f{\al}{1+\al}}\|\D
u(t)\|_{L^{\f2{1-\al}}}^{\f1{1+\al}}\,dt\\
\leq &C\|\na
u\|_{L^2_T(L^2)}^{\f{\al}{1+\al}}\bigl\|\s(t)^{\f{1+\al-s}2}\|\D
u(t)\|_{L^\f2{1-\al}}\bigr\|_{L^2_T}^{\f1{1+\al}}\Bigl(\int_0^T\s(t)^{-\f{1+\al-s}{1+\al}}\,dt\Bigr)^{\f12},
\end{split}
\eeno which together with \eqref{g.1} ensures the second inequality of
\eqref{h.8}. Along the same line, we can prove the last inequality in \eqref{h.8}.
\end{proof}

\subsection{Lagrangian formulation}
As in \cite{dm, dm2}, we shall apply Lagrangian approach to prove
the  the uniqueness of the solutions. We remark that even with
\eqref{h.1} and \eqref{h.8}, the solution of \eqref{eq:InhomoNS}
obtained in Theorem \ref{thm:existenc-2D} and Theorem
\ref{thm:existence-3D} does not satisfy the assumptions required by
Theorem 1 of \cite{dm2} concerning the uniqueness of solutions to
\eqref{eq:InhomoNS}. Fortunately, the idea used to prove Theorem 1
of \cite{dm2} can be successfully applied here.

Let $(\r,u,p)$ be the  solution of \eqref{eq:InhomoNS} obtained
in  Theorem \ref{thm:existenc-2D} and Theorem
\ref{thm:existence-3D}. Then thanks to \eqref{h.1} and \eqref{h.8},
we can define the trajectory $X(t,y)$ of $u(t,x)$ by
$$\partial_t X(t,y)=u(t,X(t,y)),\qquad X(0,y)=y,$$
which leads to the following relation between the Eulerian
coordinates $x$ and the Lagrangian coordinates $y$: \beq\label{ode1}
X(t,y)=y+\int_0^tu(\tau, X(\tau,y))d\tau. \eeq
Moreover, we deduce from \eqref{h.1} and \eqref{h.8} that we can take $T$ small enough such that
\beq\label{h.27}
\int_0^T\|\na u(t)\|_{L^\infty}dt\leq \f12.
\eeq Then for $t\leq T,$ $X(t,y)$ is invertible with respect to $y$
variables, and we denote by $Y(t,\cdot)$  its inverse mapping. Let
$v(t,y)\eqdefa u(t,x)=u(t,X(t,y))$. One has
\beq\label{h.31}
\begin{split} &\partial_t
v(t,y)=\partial_t u(t,x)+ u(t,x)\cdot\nabla u(t,x),\\
&\partial_{x_i} u^j(t,x)= \partial_{y_k} v^j(t,y)\partial_{x_i} y^k
\quad\mbox{for}\quad x=X(t,y),\ y=Y(t,x). \end{split} \eeq
Let $A(t,y)\eqdefa (\na X(t,y))^{-1}=\na_x Y(t,x)$. So we have
\beq\label{h.12}
 \nabla_x u(t,x)= A^t(t,x)\nabla_y v(t,y)\quad\mbox{ and}\quad
\dive u(t,x)=\dive(A(t,y) v(t,y)). \eeq By the chain rule, we also
have \ben\label{eq:A-div} \dive_y\big(A\cdot\big)=A^t\na_y.\een Here
and in what follows, we always denote $A^t$ the transpose matrix of
$A.$

As in \cite{dm2}, we denote \beq\label{h.13}
\begin{split}
&\na_u\eqdefa A^t\cdot\na_y,\quad
\dive_u\eqdefa\dive(A\cdot)\quad\mbox{and}\quad
\D_u\eqdefa\dive_u\na_u,\\
& \eta(t,y)\eqdefa \r(t, X(t,y)), \quad v(t,y)\eqdefa u(t,
X(t,y))\quad\mbox{and}\quad \Pi(t,y)\eqdefa p(t, X(t,y)).
\end{split}
\eeq Notice that for any $t>0,$  the  solution of
\eqref{eq:InhomoNS} obtained in  Theorem \ref{thm:existenc-2D} and
Theorem \ref{thm:existence-3D} satisfies the smoothness assumption
of Proposition 2 in \cite{dm2}, so that $(\eta, v, \na\Pi)$ defined
by \eqref{h.13} solves
\begin{equation}\label{h.14}
 \quad\left\{\begin{array}{l}
\displaystyle \p_t\eta=0,\\
\displaystyle \eta\pa_t v -\D_u v+ \na_u \Pi=0, \\
\displaystyle \dv_u\, v = 0, \\
\displaystyle (\eta, v)|_{t=0}=(\rho_0, u_0),
\end{array}\right.
\end{equation}
which is  the Lagrangian formulation of \eqref{eq:InhomoNS}.

Now we transform the regularity information of the solution in the Eulerian coordinates into those in the Lagrangian coordinates.

\begin{lem}\label{lem:v-regularity-3D}
{\sl Let $(\r, u, \na p)$ be the global solution of
\eqref{eq:InhomoNS} obtained in Theorem \ref{thm:existence-3D} and
$(\eta,v, \Pi)$ be given by \eqref{h.13}. Then for any $t\leq T$
determined by \eqref{h.27}, one has \beq\label{eq:regularity-v-3}
\begin{split}
&\int_0^t \tau^{\f12}\bigl(\|(\p_t v,\na^2 v)(\tau)\|_{L^3}^2+\|\na
\Pi(\tau)\|_{L^3}^2\bigr)\,d\tau\leq C, \\
&\|\na A\|_{L^\infty_t(L^3)}+\int_0^t\|\na v(\tau)\|_{L^\infty}\,d\tau\leq
Ct^{\f14},\\
&\int_0^t\tau^{\f12}\|\na v(\tau)\|_{L^\infty}^2\,d\tau\leq C,
\end{split}
\eeq
for some constant $C$ depending only on $c_0, C_0$ in
\eqref{g.5} and $\|u_0\|_{H^1}.$}
\end{lem}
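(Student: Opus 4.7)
The plan is to push the Eulerian estimates of Lemma \ref{lem3.1} forward to the Lagrangian side by exploiting the measure-preserving flow $X$. The foundations are two elementary facts. First, since $\dive u=0$, the Jacobian $J(t,y)\eqdefa\det\na_yX(t,y)$ satisfies $\p_tJ=(\dive u)(X)J=0$, so $J\equiv 1$ and consequently $\|f\circ X(t,\cdot)\|_{L^p(\R^3_y)}=\|f\|_{L^p(\R^3_x)}$ for every $p\in[1,\infty]$. Second, from $X(t,y)=y+\int_0^tv(\tau,y)\,d\tau$ together with \eqref{h.27}, we obtain $\|\na X(t)-I\|_{L^\infty}\le 1/2$, hence $\|\na X\|_{L^\infty}+\|A\|_{L^\infty}\le C$ throughout $[0,T]$.

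For the first line of \eqref{eq:regularity-v-3}, I would start from the chain rule $\p_tv(t,y)=(\p_tu+u\cdot\na u)(t,X(t,y))$ combined with the momentum equation $\eta\p_tv=(\D u-\na p)(X)$. Measure-preservation then gives $\|\p_tv\|_{L^3_y}\le c_0^{-1}(\|\D u\|_{L^3_x}+\|\na p\|_{L^3_x})$. Interpolating $\|w\|_{L^3}\le C\|w\|_{L^2}^{1/2}\|w\|_{L^6}^{1/2}$ and combining $\int_0^t\|\D u\|_{L^2}^2\,d\tau\le C$ from \eqref{g.4} with $\int_0^t\s(\tau)(\|\D u\|_{L^6}^2+\|\na p\|_{L^6}^2)\,d\tau\le C$ from Lemma \ref{lem3.1}, Cauchy--Schwarz yields $\int_0^t\tau^{1/2}(\|\D u\|_{L^3}^2+\|\na p\|_{L^3}^2)\,d\tau\le C$ (shrinking $T\le 1$ so that $\s(\tau)=\tau$). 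The same bound then transfers to $\|\p_tv\|_{L^3_y}^2$ and, via $\na_y\Pi=(\na_xp)(X)\na_yX$, to $\|\na\Pi\|_{L^3_y}^2$. For $\na^2v$, the chain rule gives $\na^2v=(\na^2u)(X)(\na X)^2+(\na u)(X)\na^2X$; the first summand is handled exactly like $\D u$, and since $\na^2X(t)=\int_0^t\na^2v\,d\tau$, the second summand is absorbed by a short-time bootstrap using $\int_0^t\|\na u\|_{L^\infty}\,d\tau\le 1/2$.

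For the second line, differentiating $A\na X=I$ yields $\na A=-A(\na^2X)A$, whence $\|\na A(t)\|_{L^3}\le C\int_0^t\|\na^2v(\tau)\|_{L^3}\,d\tau$. Cauchy--Schwarz and the bound just established give $\int_0^t\|\na^2v\|_{L^3}\,d\tau\le\bigl(\int_0^t\tau^{-1/2}\,d\tau\bigr)^{1/2}\bigl(\int_0^t\tau^{1/2}\|\na^2v\|_{L^3}^2\,d\tau\bigr)^{1/2}\le Ct^{1/4}$. The estimate on $\int_0^t\|\na v\|_{L^\infty}\,d\tau$ comes from $\na v(\tau,y)=(\na u)(\tau,X)\na X$, so $\|\na v\|_{L^\infty}\le C\|\na u\|_{L^\infty}$, and Lemma \ref{lem3.1} directly supplies the $t^{1/4}$ growth. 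The third line of \eqref{eq:regularity-v-3} is then immediate from $\|\na v(\tau)\|_{L^\infty}^2\le C\|\na u(\tau)\|_{L^\infty}^2$ and the third estimate of Lemma \ref{lem3.1}.

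The main obstacle is the coupling appearing in the $L^3$ estimate of $\na^2v$: rewriting it in terms of $\na^2u$ produces a commutator-type term $(\na u)(X)\na^2X$ that itself involves $\int_0^t\na^2v$. This is closed by \eqref{h.27}, which via a Gronwall (or simple bootstrap) argument lets the factor $\int_0^t\|\na u\|_{L^\infty}\,d\tau\le 1/2$ absorb the iterated term. Once that coupling is resolved, the remaining inequalities reduce to straightforward interpolation combined with the Eulerian bounds already contained in \eqref{g.4} and Lemma \ref{lem3.1}.
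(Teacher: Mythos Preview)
Your proposal is correct and follows essentially the same route as the paper: measure-preservation of $X$ to transfer $L^p$ norms, the chain rule together with the $L^\infty$ bound on $\na X$ coming from \eqref{h.27}, and interpolation of $L^3$ between the $L^2$ and $L^6$ Eulerian bounds in \eqref{g.4} and Lemma~\ref{lem3.1}. Your ``bootstrap'' for the commutator term $(\na u)(X)\na^2X$ is precisely the Gronwall the paper runs on $\na_y^2X$ (the paper solves $\p_t\na_y^2X=(\na^2u)(X)(\na_yX)^2+(\na u)(X)\na_y^2X$ directly to obtain $\|\na_y^2X(t)\|_{L^3}\le C\int_0^t\|\na^2u\|_{L^3}$, which is the same inequality you reach once you substitute $\na^2X=\int_0^t\na^2v$ back into the chain rule for $\na^2v$); the only cosmetic difference is that for $\p_tv$ you invoke the momentum equation $\eta\p_tv=(\D u-\na p)\circ X$, whereas the paper writes $\p_tv=\p_tu+u\cdot\na u$ and bounds the two pieces separately using $B_2(t)$---your variant is marginally more economical.
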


\begin{proof} We first deduce from \eqref{ode1}, \eqref{h.27} and
\eqref{h.1} that \beq\label{h.28} \|\na_yX(t,\cdot)\|_{L^\infty}\leq
\exp\bigl\{\int_0^t\|\na_xu(\tau)\|_{L^\infty}\,d\tau\bigr\}\leq
e^{\f12}, \eeq which together with \eqref{h.1} and \eqref{h.13}
implies that \beno \int_0^t\|\na v(t)\|_{L^\infty}\,dt\leq
Ct^{\f14}\quad\mbox{and}\quad \int_0^t\tau^{\f12}\|\na
v(\tau)\|_{L^\infty}^2\,d\tau\leq C. \eeno
Furthermore, thanks to
$\textrm{det}\bigl(\f{\p X(t,y)}{\p y}\bigr)=1,$ and \eqref{g.4},
\eqref{h.1}, one has \beno \|\tau^{\f14}\na\Pi\|_{L^2_t(L^3)}\leq
\|\na p\|_{L^2_t(L^2)}^{\f12}\|\tau^{\f12}\na
p\|_{L^2_t(L^6)}^{\f12}\leq C. \eeno

On the other hand, it follows from the proof of \eqref{h.28} that
\beno
\begin{split}
\|\na_y^2X(t,\cdot)\|_{L^p}\leq &\exp\bigl\{C\int_0^t\|\na_x
u(\tau)\|_{L^\infty}\,d\tau\bigr\}\int_0^t\|\na^2u(\tau,X(\tau,\cdot))\|_{L^p}\,d\tau\\
\leq &C \int_0^t\|\na^2u(\tau,\cdot)\|_{L^p}\,d\tau\quad\mbox{for
any}\ \ p\in [1,\infty].
\end{split}
\eeno In particular, if we take $p=3$ in the above inequality and
use \eqref{h.1} to get \beno \|\na_y^2X(t,\cdot)\|_{L^3}\leq
Ct^{\f14}\|\na^2 u\|_{L^2_t(L^2)}^{\f12}\|\tau^{\f12}\na^2
u\|_{L^2_t(L^6)}^{\f12}\leq Ct^{\f14}, \eeno from which and
\eqref{h.13}, we infer \beno
\begin{split}
\|\tau^{\f14}\na^2v\|_{L^2_t(L^3)}\leq&
C\bigl(\|\tau^{\f14}\na^2_xu(\tau,X(\tau,\cdot))\|_{L^2_t(L^3)}\|\na_y
X\|_{L^\infty_t(L^\infty)}+\|\tau^{\f14}\na_x
u\|_{L^2_t(L^\infty)}\|\na_y^2X\|_{L^\infty_t(L^3)}\bigr)\\
\leq& C\bigl(1+\|\tau^{\f14}\na^2_xu\|_{L^2_t(L^3)}\bigr)\\
\leq& C\bigl(1+\|\na_x^2u\|_{L^2_t(L^2)}^{\f12}\|\tau^{\f12}\na^2_xu\|_{L^2_t(L^6)}^{\f12}\bigr)\leq
C.
\end{split}
\eeno On the other hand, thanks to \eqref{h.27}, we have for $t\leq
T$ \beq\label{eq:A-expansion}
A(t,y)=\na_xY(t,x)=\bigl(Id+(\na_yX(t,y)-Id)\bigr)^{-1}=\sum_{\ell=0}^\infty(-1)^\ell\Bigl(\int_0^t\na_yu(t',X(t',y))\,dt'\Bigr)^\ell,
\eeq for $x=X(t,y),$ so that \beno
\begin{split}
\|\na A\|_{L^\infty_t(L^3)}\leq &
C\|\na_x^2u(\tau,X(\tau,\cdot))\|_{L^1_t(L^3)}\|\na_yX\|_{L^\infty_t(L^\infty)}\leq  Ct^{\f14}.
\end{split}
\eeno

Finally, it follows from \eqref{g.4}, \eqref{h.1} and \eqref{h.31}
that \beno
\begin{split} \|\tau^{\f14}\p_\tau v\|_{L^2_t(L^3)}\leq &  \|\tau^{\f14}\p_\tau u\|_{L^2_t(L^3)}
+\|\tau^{\f14}u\cdot\na u\|_{L^2_t(L^3)}\\
\leq& C\Bigl\{\|\p_\tau
u\|_{L^2_t(L^2)}^{\f12}\|\tau^{\f12}\na\p_\tau
u\|_{L^2_t(L^2)}^{\f12}\\
&\quad+\|\na u\|_{L^\infty_t(L^2)}^{\f12}\|\tau^{\f12}\na^2
u\|_{L^\infty_t(L^2)}^{\f12}\|\na
u\|_{L^2_t(L^2)}^{\f12}\|\na^2u\|_{L^2_t(L^2)}^{\f12}\Bigr\} \leq C.
\end{split}
\eeno
This completes the proof of the lemma.
\end{proof}

\begin{lem}\label{lem:v-regularity-2D}
{\sl Let $(\r, u, \na p)$ be the global solution of
\eqref{eq:InhomoNS} obtained in Theorem \ref{thm:existenc-2D} and
$(\eta, v, \Pi)$ be given by \eqref{h.13}. Then for any $t\leq T$
determined by \eqref{h.27} and $0\le\al<s$, one has
\beq\label{eq:regularity-v-2}
\begin{split}
&\int_0^t\tau^{1+\al-s}\bigl(\|(\p_\tau v, \na^2 v)(\tau)\|_{L^{\f 2{1-\al}}}^2+\|\na
\Pi(\tau)\|_{L^{\f 2{1-\al}}}^2\bigr)\,d\tau\leq C,\\
&\|\na A\|_{L^\infty_t(L^{\f 2{1-\al}})}\leq
Ct^{\f {s-\al} 2},\quad \int_0^t\|\na v(\tau)\|_{L^\infty}\,d\tau\leq
Ct^{\f{s}{2(1+\al)}},\\
&\int_0^t\tau^{1-\frac s {1+\al}}\|\na v(\tau)\|_{L^\infty}^2\,d\tau\leq C,
\end{split}
\eeq for some constant $C$ depending only on $c_0, C_0$ in
\eqref{h.7} and $\|u_0\|_{H^s}.$}
\end{lem}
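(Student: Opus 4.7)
The plan is to mirror the proof of Lemma \ref{lem:v-regularity-3D}, substituting the 2-D Eulerian bounds of Lemma \ref{lem3.2} for those of Lemma \ref{lem3.1}. Since $t\leq T$ and we may assume $T\leq 1$, we have $\sigma(\tau)=\tau$ throughout $[0,t]$, so the time weights in \eqref{h.8} match those appearing in \eqref{eq:regularity-v-2}. The basic transfer tool is the bound $\|\nabla_y X(t,\cdot)\|_{L^\infty}\leq e^{1/2}$ from \eqref{h.28}, combined with the volume-preserving identity $\det(\partial X/\partial y)=1$, which together allow us to identify Eulerian and Lagrangian $L^p$ norms up to a universal constant.

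I would begin with the pointwise bounds on $\nabla v$. From \eqref{h.31} we have $\nabla_y v(t,y)=\nabla_x u(t,X)\cdot\nabla_y X$, so $\|\nabla v(t)\|_{L^\infty}\leq C\|\nabla u(t)\|_{L^\infty}$, and the last two bounds of \eqref{eq:regularity-v-2} reduce at once to the corresponding Eulerian bounds in \eqref{h.8}.

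Next I would control $\nabla_y^2 X$. Differentiating \eqref{ode1} twice in $y$, using \eqref{h.28} and a Gr\"onwall argument gives, for any $p\in[1,\infty]$,
\begin{equation*}
\|\nabla_y^2 X(t,\cdot)\|_{L^p}\leq C\int_0^t\|\nabla_x^2 u(\tau,\cdot)\|_{L^p}\,d\tau.
\end{equation*}
Taking $p=2/(1-\alpha)$, Cauchy--Schwarz with weight $\tau^{1+\alpha-s}$ together with the first estimate of \eqref{h.8} yields
\begin{equation*}
\|\nabla_y^2 X\|_{L^\infty_t(L^{2/(1-\alpha)})}\leq C\Bigl(\int_0^t\tau^{s-\alpha-1}\,d\tau\Bigr)^{1/2}\leq Ct^{(s-\alpha)/2},
\end{equation*}
where integrability at $\tau=0$ uses the assumption $s>\alpha$. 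Since $A=(\nabla_y X)^{-1}$ is uniformly bounded in $L^\infty$ by the Neumann series \eqref{eq:A-expansion}, we have $\nabla A=-A\,(\nabla_y^2 X)\,A$ pointwise, hence $\|\nabla A\|_{L^\infty_t(L^{2/(1-\alpha)})}\leq Ct^{(s-\alpha)/2}$.

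Finally, for the first, time-weighted estimate in \eqref{eq:regularity-v-2}: from \eqref{h.31} we have $\partial_t v=(\partial_t u+u\cdot\nabla u)(t,X)$, $\nabla_y^2 v$ splits as $\nabla_x^2 u\cdot(\nabla_y X)^{\otimes 2}+\nabla_x u\cdot\nabla_y^2 X$, and $\nabla_y\Pi=(\nabla_x p\cdot\nabla_y X)(t,X)$. Taking $L^{2/(1-\alpha)}$ norms with weight $\tau^{1+\alpha-s}$, the principal contributions reduce directly to the first estimate of \eqref{h.8} (after an interpolation to handle $u\cdot\nabla u$ in the spirit of \eqref{h.10}); the only delicate piece is the cross term $\nabla_x u\cdot\nabla_y^2 X$ in $\nabla_y^2 v$, which I would bound via a H\"older split
\begin{equation*}
\int_0^t\tau^{1+\alpha-s}\|\nabla_x u\|_{L^\infty}^2\|\nabla_y^2 X\|_{L^{2/(1-\alpha)}}^2\,d\tau\leq Ct^{s-\alpha}\int_0^t\tau^{1+\alpha-s}\|\nabla_x u\|_{L^\infty}^2\,d\tau\leq Ct^s,
\end{equation*}
using $\tau^{1+\alpha-s}\leq t^\alpha\tau^{1-s}$ on $\tau\leq t$ and the last estimate of \eqref{h.8} with $\alpha=0$. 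The main obstacle will be the careful bookkeeping of exponents across the admissible range of $\alpha\in[0,s)$ and $s>0$; once the weights are matched, collecting all contributions completes the proof.
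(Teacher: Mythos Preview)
Your proposal is correct and is exactly the approach the paper takes: the paper's own proof of this lemma consists of the single sentence ``The proof is similar to Lemma~\ref{lem:v-regularity-3D}. We omit the details.'' Your write-up supplies precisely those omitted details, transferring each step of the 3-D argument to 2-D via the Eulerian bounds of Lemma~\ref{lem3.2} and handling the weight bookkeeping (in particular the cross term $\nabla_x u\cdot\nabla_y^2 X$) correctly.
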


\begin{proof}

The proof is similar to Lemma \ref{lem:v-regularity-3D}.  We omit the details.
\end{proof}

\subsection{The proof of the uniqueness}

We first recall the following lemma from \cite{dm2}.

\begin{lem}\label{lem3.3}
{\sl Let $\eta\in L^\infty(\R^d)$ be a time independent positive
function, and be bounded away from zero. Let  $R$ satisfy $R_t\in
L^2((0,T)\times\R^d)$ and $\na\dive R\in L^2((0,T)\times\R^d).$ Then
the following system
\begin{equation*}
\left\{
\begin{array}{ll}
\eta\p_tv-\Delta v+\na \Pi=f\qquad (t,x)\in (0,T)\times\R^d,\\
\textrm{div} v=\dive R,\\
v|_{t=0}=v_0,
\end{array}
\right.
\end{equation*}
has a unique solution $(v, \na\Pi)$ such that \beno \|\na
v\|_{L^\infty_T(L^2)}+\|(v_t,\na^2v,\na \Pi)\|_{L^2_T(L^2)}\leq
C\bigl(\|\na v_0\|_{L^2}+\|(f,R_t)\|_{L^2_T(L^2)}+\|\na\dive
R\|_{L^2_T(L^2)}\bigr), \eeno where $C$ depends on $\inf\eta$ and
$\sup\eta,$ but independent of $T.$}
\end{lem}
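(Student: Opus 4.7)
The idea is to treat this as a linear divergence-constrained system and decouple pressure and velocity estimates by combining an elliptic bound for $\na\Pi$ with an energy estimate obtained by testing against $v_t$. I would proceed in three steps.

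First, I would take $\dive$ of the momentum equation and use the constraint $\dive v=\dive R$ to derive the Poisson equation $\D\Pi=\dive(f-\eta\p_tv)+\D\dive R$. Since the singular integral operator $\na\D^{-1}\dive$ is bounded on $L^2(\R^d)$ (essentially a Riesz transform), this yields the pointwise-in-$t$ bound
\[
\|\na\Pi(t)\|_{L^2}\le C\bigl(\|f(t)\|_{L^2}+\|v_t(t)\|_{L^2}+\|\na\dive R(t)\|_{L^2}\bigr).
\]
Then $\D v=\eta\p_tv+\na\Pi-f$ together with the Plancherel identity $\|\na^2v\|_{L^2}=\|\D v\|_{L^2}$ on $\R^d$ produces the analogous pointwise-in-$t$ estimate for $\|\na^2v(t)\|_{L^2}$.

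Second, I would take the $L^2$ inner product of the momentum equation with $v_t$ to obtain
\[
\int_{\R^d}\eta|v_t|^2\,dx+\f12\f{d}{dt}\|\na v\|_{L^2}^2+\int_{\R^d}\na\Pi\cdot v_t\,dx=\int_{\R^d}f\cdot v_t\,dx.
\]
The crucial observation is that differentiating the constraint in time gives $\dive(v_t-R_t)=0$, so integration by parts yields the identity $\int\na\Pi\cdot v_t\,dx=\int\na\Pi\cdot R_t\,dx$, which by Step~1 is bounded by $C(\|f\|_{L^2}+\|v_t\|_{L^2}+\|\na\dive R\|_{L^2})\|R_t\|_{L^2}$. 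Young's inequality absorbs $\f{c_0}{2}\|v_t\|_{L^2}^2$ on the left ($c_0=\inf\eta>0$), and after integration in time and combination with Step~1 one obtains the claimed global estimate on $\|\na v\|_{L^\infty_T(L^2)}+\|(\p_tv,\na^2v,\na\Pi)\|_{L^2_T(L^2)}$.

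Third, existence follows by a standard Galerkin approximation (or equivalently, by the reduction $v=w+\na q$ with $\D q=\dive R$, turning the system into a Stokes-type problem for the solenoidal field $w$ with variable density $\eta$) together with the uniform bound just derived and a compactness argument. Uniqueness is immediate by applying the same a priori estimate to the difference of two solutions, which solves the system with zero data.

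\textbf{Main obstacle.} The single delicate point is the pressure term $\int\na\Pi\cdot v_t\,dx$ in the energy identity: for solenoidal velocities it vanishes, but here one only has $\dive v=\dive R\neq0$. The identity $\int\na\Pi\cdot v_t=\int\na\Pi\cdot R_t$, coming from $\dive(v_t-R_t)=0$, is what closes the argument, since it trades an otherwise uncontrolled cross-term for a quantity dominated by the $L^2_T(L^2)$ hypotheses on $R_t$ and $\na\dive R$ through the elliptic estimate for $\na\Pi$.
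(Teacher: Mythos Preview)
Your argument is correct, and in particular the key manipulation $\int\na\Pi\cdot v_t\,dx=\int\na\Pi\cdot R_t\,dx$ (obtained from $\dive(v_t-R_t)=0$) together with the elliptic bound on $\na\Pi$ is exactly what makes the energy estimate close. Note, however, that the paper does not give its own proof of this lemma: it merely \emph{recalls} the statement from Danchin--Mucha \cite{dm2}, so there is no paper proof to compare against. Your write-up is the natural direct argument and is consistent with what one expects the cited proof to be.
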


\begin{proof}[Proof to the uniqueness parts of  Theorems \ref{thm:existenc-2D} and \ref{thm:existence-3D}]
Let  $(\rho_i, u_i, \na p_i), i=1,2,$  be two solutions of
\eqref{eq:InhomoNS}  obtained in Theorem \ref{thm:existenc-2D} and
Theorem \ref{thm:existence-3D}, and $(\eta_i, v_i, \na\Pi), i=1,2,$
be determined by \eqref{h.13}. We denote $A_i\eqdefa A(u_i)$ for
$i=1,2,$ and $\d v\eqdefa v_2-v_1,\d\Pi\eqdefa\Pi_2-\Pi_1$ and $\d
A=A_2-A_1,$ then we deduce from \eqref{h.14} that
\begin{equation}\label{differ}
 \quad\left\{\begin{array}{l}
\displaystyle \r_0\pa_t \d v -\D\d v+ \na \d \Pi= \d f_1 +\d f_2, \\
\displaystyle \dv\, \d v = \dive\d g, \\
\displaystyle \d v|_{t=0}=0,
\end{array}\right.
\end{equation}
with
\beq\label{eq:df-formula}
\begin{split} \d f_1 \eqdefa
&-\bigl[(\na-\na_{u_1})\Pi_1-(\na-\na_{u_2})\Pi_2\bigr]\\
=&-(Id- A_2^t)\na \d \Pi-\d A^t \na
\Pi_1,\\
\d f_2 \eqdefa& -\bigl[(\D-\D_{u_1})v_1-(\D-\D_{u_2})v_2\bigr]\\
 =&
\dive\bigl[(Id-A_2 A_2^t)\na \d v+ (A_1 A_1^t-
A_2 A_2^t)\na v_1],\\
\d g \eqdefa & -(Id-A_1)v_1+(Id-A_2)v_2\\
 =&(Id-A_2)\d v-\d Av_1.
\end{split} \eeq

We denote \beno \d E(t)\eqdefa \|\na\d
v\|_{L^\infty_t(L^2)}+\|(\p_t\d v, \na^2\d v,
\na\d\Pi)\|_{L^2_t(L^2)}. \eeno Then we infer from Lemma
\ref{lem3.3} and \eqref{differ} that \beno \d E(t) \leq C\bigl(\|\d
f_1\|_{L^2_t(L^2)}+\|\d f_2\|_{L^2_t(L^2)}+\|\na \textrm{div}\d
g\|_{L^2_t(L^2)}+\|\p_t\d g\|_{L^2_t(L^2)}\bigr). \eeno We will show
that \ben\label{eq:source-est} \|\d f_1\|_{L^2_t(L^2)}+\|\d
f_2\|_{L^2_t(L^2)}+\|\na\dive\d g\|_{L^2_t(L^2)}+\|\p_t\d
g\|_{L^2_t(L^2)}\le \varepsilon(t)\d E(t), \een where the function
$\varepsilon(t)$ tends to zero as $t$ goes to zero. With
(\ref{eq:source-est}) being granted, we infer that \beno \d E(t)
\leq \varepsilon(t)\d E(t), \eeno which ensures the uniqueness of
solutions obtained in Theorem \ref{thm:existenc-2D} and Theorem
\ref{thm:existence-3D} on a sufficiently small time interval
$[0,T_1].$ The uniqueness on the whole time $[0,\infty)$  can be
obtained by a bootstrap argument.
\end{proof}

Now let us turn to the  proof (\ref{eq:source-est}). Indeed thanks
to \eqref{h.27}, we can take the time $T$ to be small enough so that
\beno \int_0^T \|\na v_i(\tau)\|_{L^\infty}\,d\tau\leq \f12,\quad
i=1,2. \eeno As a convention in the sequel, we shall always assume
that $t\leq T.$ Thanks to \eqref{eq:A-expansion},  we write \beq
\begin{split} & \d A(t)=\bigl(\int_0^t \na \d
v\,d\tau\bigr)\bigl(\sum_{k\geq 1}\sum_{0\leq
j<k}C_1^jC_2^{k-1-j}\bigr)\quad\mbox{with}\quad
C_i(t)\eqdefa\int_0^t\na v_i\,d\tau.\end{split} \label{eq:dA-form}
\eeq

The proof of (\ref{eq:source-est}) will split into the following two
cases. \vspace{0.1cm}

\begin{proof}[ Proof of (\ref{eq:source-est}) in 3-D case]

We first deduce from  \eqref{eq:A-expansion} and
(\ref{eq:regularity-v-3}) that \beno
\|(Id-A_2^t)\na\d\Pi\|_{L^2_t(L^2)}\leq
C\int_0^t\|\na_yv_2(t')\|_{L^\infty}\,dt'\|\na\d\Pi\|_{L^2_t(L^2)}\leq
Ct^{\f 1 4}\d E(t). \eeno While it follows from \eqref{eq:dA-form}
that \beno \|\d A\|_{L^\infty_t(L^6)}\leq C\bigl\|\int_0^\tau|\na\d
v|\,dt'\bigr\|_{L^\infty_t(L^6)}\leq Ct^{\f12}\|\na^2\d
v\|_{L^2_t(L^2)},\eeno which along with \eqref{eq:regularity-v-3}
implies \beno
\begin{split}
\|\d A^t\na \Pi_1\|_{L^2_t(L^2)}\leq &\|\tau^{-\f14}\d
A(\tau)\|_{L^\infty_t(L^6)}\|\tau^{\f14}\na
\Pi_1(\tau)\|_{L^2_t(L^3)}\leq  C t^{\f14}\d E(t),
\end{split}
\eeno
so that thanks to \eqref{eq:df-formula}, we obtain
\beq \label{eq:f1-3d}
\|\d f_1\|_{L^2_t(L^2)}\leq C t^{\f14}\d E(t).
\eeq

Next we handle $\na\dive\d g.$  We first get by applying \eqref{eq:dA-form} that
\ben\label{eq:dA-gradient}
|\na\d A(t)|\le C\int_0^t|\na^2\d v|d\tau+C\int_0^t|\na\d v|d\tau\int_0^t\big(|\na^2 v_1|+|\na^2 v_2|\big)d\tau.
\een
Hence, we have by (\ref{eq:regularity-v-3}) that
\beno
\begin{split}
&\|\na(\d A\na v_1)\|_{L^2_t(L^2)}\\
&\leq C\Bigl\{\bigl\|\int_0^\tau|\na^2\d v|\,d\tau'\na
v_1\bigr\|_{L^2_t(L^2)}+\bigl\|\int_0^\tau|\na\d v|\,d\tau'\na^2
v_1\bigr\|_{L^2_t(L^2)}\\
&\qquad\quad+\bigl\|\int_0^\tau|\na\d
v|\,d\tau'\int_0^\tau\big(|\na^2 v_1|+|\na^2 v_2|\big)d\tau'\na
v_1\bigr\|_{L^2_t(L^2)}\Bigr\}\\
&\leq C\Bigl\{\bigl\|\tau^{-\f14}\int_0^\tau|\na^2\d
v|\,d\tau'\bigr\|_{L^\infty_t(L^2)}\|\tau^{\f14}\na
v_1\|_{L^2_t(L^\infty)}\\
&\qquad\quad+\bigl\|\tau^{-\f14}\int_0^\tau|\na\d
v|\,d\tau'\bigr\|_{L^\infty_t(L^6)}\|\tau^{\f14}(\na^2 v_1,\na^2
v_2)\bigr\|_{L^2_t(L^3)}\big(1+\|\tau^{\f14}\na
v_1\bigr\|_{L^2_t(L^\infty)}\big)\Bigr\}\\
&\leq Ct^{\f14}\d E(t).
\end{split}
\eeno
Along the same line, one has
\beno
\begin{split}
&\|\na\bigl((Id-A_2)\na\d v\bigr)\|_{L^2_t(L^2)}\\
&\leq
C\Bigl\{\bigl\|\int_0^t|\na^2
v_2|\,d\tau'\bigr\|_{L^\infty_t(L^3)}\|\na\d
v\|_{L^2_t(L^6)}+\bigl\|\int_0^t|\na
v_2|\,d\tau'\bigr\|_{L^\infty_t(L^\infty)}\|\na^2\d
v\|_{L^2_t(L^2)}\Bigr\}\\
&\leq Ct^{\f14}\d E(t).
\end{split}
\eeno
Thus, it follows from (\ref{eq:A-div}) and (\ref{eq:df-formula}) that
\beq \label{eq:g-gradient}
\|\na\dive\d
g\|_{L^2_t(L^2)}\leq Ct^{\f14}\d E(t).
\eeq

To deal with $\d f_2,$ we write \ben\label{eq:f2-1}
(A_2A_2^t-A_1A_1^t)\na v_1=\bigl(-\d A(Id-A_2^t)+(Id-A_1)\d
A^t\bigr)\na v_1+(\d A^t+\d A)\na v_1. \een It is easy to check that
\beno
\begin{split}
&\|\dive\bigl[\bigl(\d A(Id-A_2^t)\bigr)\na v_1\bigr]\|_{L^2_t(L^2)}\\
&\leq \|\tau^{-\f14}\na\d
A\|_{L^\infty_t(L^2)}\|Id-A_2\|_{L^\infty_t(L^\infty)}\|\tau^{\f14}\na
v_1\|_{L^2(L^\infty)}\\
&\quad+\|\tau^{-\f14}\d A\|_{L^\infty_t(L^6)}\|\na
A_2\|_{L^\infty_t(L^3)}\|\tau^{\f14}\na
v_1\|_{L^2(L^\infty)}\\
&\quad+\|\tau^{-\f14}\d
A\|_{L^\infty_t(L^6)}\|Id-A_2\|_{L^\infty_t(L^\infty)}\|\tau^{\f14}\na^2
v_1\|_{L^2(L^3)},
\end{split}
\eeno which along with (\ref{eq:dA-gradient}) and \eqref{eq:regularity-v-3} implies that
\beno
\begin{split}
&\|\dive\bigl[\bigl(\d A(Id-A_2^t)\bigr)\na v_1\bigr]\|_{L^2_t(L^2)}\\
&\leq Ct^{\f14}\|\na^2\d v\|_{L^2_t(L^2)}\Bigl\{\|\na
v_2\|_{L^1_t(L^\infty)}\big(\|\tau^{\f14}\na
v_1\|_{L^2(L^\infty)}+\|\tau^{\f14}\na^2v_1\|_{L^2_t(L^3)}\big)\\
&\quad+\|\na A_2\|_{L^\infty_t(L^3)}\|\tau^{\f14}\na
v_1\|_{L^2(L^\infty)} \Bigr\}\leq Ct^{\f12}\d E(t).
\end{split}
\eeno The same estimate holds for $\dive\bigl[(Id-A_1)\d
A^t\bigr)\na v_1\bigr]$ and $\dive\bigl[(\d A^t+\d A)\na v_1\bigr]$.
Hence, \beno \|\dive\bigl[(A_2A_2^t-A_1A_1^t)\na
v_1\bigr]\|_{L^2_t(L^2)}\leq Ct^{\f12}\d E(t). \eeno

To handle $\dive\bigl[(Id-A_2A_2^t)\na\d v\bigr],$ we write
\ben\label{eq:f2-2} (Id-A_2A_2^t)\na\d
v=-\bigl\{(Id-A_2)(Id-A_2^t)-(Id-A_2^t)-(Id-A_2)\bigr\}\na\d v. \een
Then we get, by using (\ref{eq:regularity-v-3}), that \beno
\begin{split}
&\|\dive\bigl[(Id-A_2A_2^t)\na\d v\bigr]\|_{L^2_t(L^2)}\\
&\leq C\Bigl\{\|\na
A_2\|_{L^\infty_t(L^3)}\|Id-A_2\|_{L^\infty_t(L^\infty)}\|\na\d
v\|_{L^2_t(L^6)}+\|(Id-A_2)\|_{L^\infty_t(L^\infty)}\|\na^2\d
v\|_{L^2_t(L^2)}\Bigr\}\\
&\leq C t^{\f14}\d E(t).
\end{split}
\eeno
Summing up, we obtain
\beq\label{eq:f2-3d}
\|\d f_2\|_{L^2_t(L^2)}\leq  C t^{\f14}\d E(t).
\eeq

Finally, let us turn to the estimate of $\|\p_t\d g\|_{L^2_t(L^2)}.$
Thanks to \eqref{eq:dA-form}, we have
\beno
\begin{split}
\|\p_t[\d A v_1]\|_{L^2_t(L^2)}\leq & C\Bigl\{\|(\na\d v)
v_1\|_{L^2_t(L^2)}+\bigl\|\int_0^\tau|\na\d
v|\,d\tau'|(\na
v_1, \na v_2)|v_1\bigr\|_{L^2_t(L^2)}\\
&\quad+\bigl\|\int_0^\tau|\na\d
v|\,d\tau'|\p_tv_1|\bigr\|_{L^2_t(L^2)}\Bigr\}\\
\leq & C\Bigl\{\|\na\d
v\|_{L^\infty_t(L^2)}\|v_1\|_{L^2_t(L^\infty)}+\bigl\|\tau^{-\f14}\int_0^\tau|\na\d
v|\,d\tau'\bigr\|_{L^\infty_t(L^6)}\\
&\quad\times\bigl(\|\tau^{\f14}(\na
v_1, \na v_2)\|_{L^2_t(L^\infty)}\|v_1\bigr\|_{L^\infty_t(L^3)}+\|\tau^{\f14}\p_tv_1\|_{L^2_t(L^3)}\bigr)\Bigr\},
\end{split}
\eeno
which together with \eqref{g.4} and \eqref{eq:regularity-v-3} ensures that
\beno
\begin{split}
\|\p_t[\d A v_1]\|_{L^2_t(L^2)}\leq & C\Bigl\{\|\na
u_1\|_{L^2_t(L^2)}^{\f12}\|\D u_1\|_{L^2_t(L^2)}^{\f12}\|\na\d
v\|_{L^\infty_t(L^2)}+t^{\f14}\|\na^2\d
v\|_{L^2_t(L^2)}\\
&\quad\times\bigl(\|\tau^{\f14}(\na
v_1, \na v_2)\|_{L^2_t(L^\infty)}\|u_1\|_{L^\infty_t(L^2)}^{\f12}\|\na
u_1\|_{L^\infty_t(L^2)}^{\f12}+\|\tau^{\f14}\p_t
v_1\|_{L^2_t(L^3)}\bigr)\Bigr\}\\
\leq &Ct^{\f14}\|\na\d v\|_{L^\infty_t(L^2)}+Ct^{\f14}\|\na^2\d
v\|_{L^2_t(L^2)}\le Ct^{\f14}\d E(t).
\end{split}
\eeno
Along the same line, one has
\beno
\begin{split}
\|\p_t[(Id-A_2)\d v]\|_{L^2_t(L^2)} \leq& C\Bigl\{\|\na
v_2\d v\|_{L^2_t(L^2)}+\bigl\|\int_0^\tau|\na v_2|\,d\tau'|\na
v_2||\d v|\bigr\|_{L^2_t(L^2)}\\
&\qquad+\bigl\|\int_0^\tau|\na v_2|\,d\tau'|\p_t\d
v|\bigr\|_{L^2_t(L^2)}\Bigr\}\\
\leq &C\Bigl\{\|\d v\|_{L^\infty_t(L^6)}\|\na v_2\|_{L^2_t(L^3)}+
\|\na v_2\|_{L^1_t(L^\infty)}\\
&\qquad\times\bigl(\|\na v_2\|_{L^2_t(L^3)}\|\d
v\|_{L^\infty_t(L^6)}+\|\p_t\d v\|_{L^2_t(L^2)}\bigr)\Bigr\}\\
\leq & Ct^{\f14}\d E(t).
\end{split}
\eeno Hence, we arrive at \ben\label{eq:g-time-3d} \|\p_t\d
g\|_{L^2_t(L^2)}\leq Ct^{\f14}\d E(t). \een Then
(\ref{eq:source-est}) follows from (\ref{eq:f1-3d}),
(\ref{eq:g-gradient}), (\ref{eq:f2-3d}) and
(\ref{eq:g-time-3d}).\end{proof}

\begin{proof}[Proof of (\ref{eq:source-est}) in 2-D case] In what
follows, we shall always  take $0<\al<s.$  By virtue of
(\ref{eq:A-expansion}) and (\ref{eq:regularity-v-2}), we have
 \beno
\|(Id-A_2^t)\na\d\Pi\|_{L^2_t(L^2)}\leq
C\int_0^t\|\na_yv_2(t')\|_{L^\infty}\,dt'\|\na\d\Pi\|_{L^2_t(L^2)}\leq
Ct^{\f{s}{2(1+\al)}}\d E(t), \eeno and by Gagliardo-Nirenberg
inequality, one has
\begin{align}
\|\d A\|_{L^\infty_t(L^{\f 2 \al})}\leq& C\bigl\|\int_0^t|\na\d
v|\,dt'\bigr\|_{L^\infty_t(L^{\f 2\al})}\nonumber\\
\le& C\int_0^t\|\na\d
v\|_{L^2}^{\al}\|\na^2\d v\|_{L^2}^{1-\al}\,dt'\le Ct^{\f{1+\al} 2}\d E(t),\label{eq:dA}
\end{align}
which along with \eqref{eq:regularity-v-2} implies that
\beno
\begin{split}
\|\d A^t\na \Pi_1\|_{L^2_t(L^2)}\leq &\|\tau^{-\f {1+\al-s} 2}\d
A(\tau)\|_{L^\infty_t(L^{\f 2\al})}\|\tau^{\f {1+\al-s} 2}\na
\Pi_1(\tau)\|_{L^2_t(L^\f 2 {1-\al})} \leq C t^{\f s 2}\d E(t).
\end{split}
\eeno As a consequence, we obtain \beq \label{eq:f1-2d} \|\d
f_1\|_{L^2_t(L^2)}\leq Ct^{\f{s}{2(1+\al)}}\d E(t). \eeq

While due to \eqref{eq:dA-gradient}, we get, by using (\ref{eq:dA})
and (\ref{eq:regularity-v-2}), that \beno
\begin{split}
\|\na(\d A\na v_1)\|_{L^2_t(L^2)} &\le
C\Bigl\{\bigl\|\int_0^\tau|\na^2\d v|\,d\tau'\na
v_1\bigr\|_{L^2_t(L^2)}+\bigl\|\int_0^\tau|\na\d v|\,d\tau'\na^2
v_1\bigr\|_{L^2_t(L^2)}\\
&\quad+\bigl\|\int_0^\tau|\na\d v|\,d\tau'\int_0^\tau\big(|\na^2
v_1|+|\na^2 v_2|\big)d\tau'\na
v_1\bigr\|_{L^2_t(L^2)}\Bigr\}\\
&\leq  C\Bigl\{\bigl\|\tau^{-\frac {1+\al-s}
{2(1+\al)}}\int_0^\tau|\na^2\d
v|\,d\tau'\bigr\|_{L^\infty_t(L^2)}\|\tau^{\frac {1+\al-s}
{2(1+\al)}}\na
v_1\|_{L^2_t(L^\infty)}\\
&\quad+\bigl\|\tau^{-\f{1+\al-s} 2}\int_0^\tau|\na\d
v|\,d\tau'\bigr\|_{L^\infty_t(L^{\f 2 \al})}\|\tau^{\f{1+\al-s}
2}(\na^2 v_1,\na^2v_2)\bigr\|_{L^2_t(L^{\f 2
{1-\al}})}\\
&\quad\times \big(1+\|\tau^{\frac {1+\al-s} {2(1+\al)}}\na
v_1\|_{L^2_t(L^\infty)}\big)\Bigr\}\\
&\le Ct^{\f{s}{2(1+\al)}}\d E(t),
\end{split}
\eeno
and we also have
\beno
\begin{split}
&\|\na\bigl((Id-A_2)\na\d v\bigr)\|_{L^2_t(L^2)}\\
&\leq
C\Bigl\{\bigl\|\int_0^t|\na^2
v_2|\,d\tau'\bigr\|_{L^\infty_t(L^{\f 2 {1-\al}})}\|\na\d
v\|_{L^2_t(L^{\f 2 \al})}
+\bigl\|\int_0^t|\na
v_2|\,d\tau'\bigr\|_{L^\infty_t(L^\infty)}\|\na^2\d
v\|_{L^2_t(L^2)}\Bigr\}\\
&\leq Ct^{\f{s}{2(1+\al)}}\d E(t).
\end{split}
\eeno Here we used the fact that \ben\label{eq:dv-Lp} \|\na\d
v\|_{L^2_t(L^{\f 2 \al})}\le Ct^{\f \al 2}\|\na\d
v\|_{L^\infty_t(L^2)}^{\al}\|\na^2\d v\|_{L^2_t(L^2)}^{1-\al} \le
Ct^{\f \al 2}\d E(t). \een Hence, we obtain \beq
\label{eq:g-gradient-2d} \|\na\dive\d g\|_{L^2_t(L^2)}\leq
Ct^{\f{s}{2(1+\al)}}\d E(t). \eeq

Now we handle $\|\d f_2\|_{L^2_t(L^2)}$. Indeed it follows from
(\ref{eq:regularity-v-2}) and (\ref{eq:dA}) that \beno
\begin{split}
&\|\dive\bigl[\bigl(\d A(Id-A_2^t)\bigr)\na v_1\bigr]\|_{L^2_t(L^2)}\\
&\leq \|\tau^{-\frac {1+\al-s} {2(1+\al)}}\na\d
A\|_{L^\infty_t(L^2)}\|Id-A_2\|_{L^\infty_t(L^\infty)}\|\tau^{\frac {1+\al-s} {2(1+\al)}}\na
v_1\|_{L^2(L^\infty)}\\
&\quad+\|\tau^{-\frac {1+\al-s} {2(1+\al)}}\d A\|_{L^\infty_t(L^{\f
2 \al})}\|\na A_2\|_{L^\infty_t(L^{\f 2 {1-\al}})}\|\tau^{\frac
{1+\al-s} {2(1+\al)}}\na
v_1\|_{L^2(L^\infty)}\\
&\quad+\|\tau^{-\f {1+\al-s} {2} }\d A\|_{L^\infty_t(L^{\f 2
\al})}\|Id-A_2\|_{L^\infty_t(L^\infty)}\|\tau^{\f {1+\al-s} {2}
}\na^2
v_1\|_{L^2(L^{\f 2 {1-\al}})}\\
&\le  Ct^{\f{s}{2(1+\al)}}\d E(t).
\end{split}
\eeno The same estimate holds for the remaining terms
in(\ref{eq:f2-1}). Hence, we get \beno
\|\dive\bigl[(A_2A_2^t-A_1A_1^t)\na v_1\bigr]\|_{L^2_t(L^2)}\leq
Ct^{\f{s}{2(1+\al)}}\d E(t). \eeno Whereas thanks to
(\ref{eq:f2-2}), we get, by using (\ref{eq:regularity-v-2}), that
\beno
\begin{split}
&\|\dive\bigl((Id-A_2A_2^t)\na\d v\bigr)\|_{L^2_t(L^2)}\\
&\leq C\bigl(\|\na A_2\|_{L^\infty_t(L^{\f 2
{1-\al}})}\|Id-A_2\|_{L^\infty_t(L^\infty)}\|\na\d v\|_{L^2_t(L^{\f
2 \al})}+\|(Id-A_2)\|_{L^\infty_t(L^\infty)}\|\na^2\d
v\|_{L^2_t(L^2)}\bigr)\\
&\leq Ct^{\f{s}{2(1+\al)}}\d E(t).
\end{split}
\eeno So we obtain \beq\label{eq:f2-2d} \|\d f_2\|_{L^2_t(L^2)}\leq
Ct^{\f{s}{2(1+\al)}}\d E(t). \eeq

Finally we deal with $\|\p_t\d g\|_{L^2_t(L^2)}$.  As in the 3-D
case, we have \beno
\begin{split}
\|\p_t\bigl[\d A v_1\bigr]\|_{L^2_t(L^2)}\leq & C\Bigl\{\|(\na\d v)
v_1\|_{L^2_t(L^2)}+\bigl\|\int_0^\tau|\na\d
v|\,d\tau'|(\na
v_1, \na v_2)|v_1\bigr\|_{L^2_t(L^2)}\\
&\quad+\bigl\|\int_0^\tau|\na\d
v|\,d\tau'|\p_tv_1|\bigr\|_{L^2_t(L^2)}\Bigr\}.
\end{split}
\eeno
By (\ref{eq:regularity-v-2}), the first term on the right hand side is bounded by
\beno
\|\na\d v\|_{L^\infty_t(L^2)}\|v_1\|_{L^2_t(L^\infty)}\le C\d E(t)\|v_1\|_{L^\infty_t(L^2)}^\f12\|\na^2v_1\|_{L^1_t(L^2)}^\f12\le Ct^{\f {s} 4}\d E(t),
\eeno
and the third term is bounded by
\beno
\bigl\|\tau^{-\f {1+\al-s} 2}\int_0^\tau|\na\d
v|\,d\tau'\bigr\|_{L^\infty_t(L^{\f 2\al})}\|\tau^{\f {1+\al-s} 2}\p_tv_1\|_{L^2_t(L^{\f 2 {1-\al}})}
\le Ct^{\f s 2}\d E(t),
\eeno
and the second term is bounded by
\beno
&&\bigl\|\tau^{-\f{1+\al-s} {2(1+\al)}-\f {(1-s)\al} 2}\int_0^\tau|\na\d
v|\,d\tau'\bigr\|_{L^\infty_t(L^{\f 2\al})}
\|\tau^{\f{1+\al-s} {2(1+\al)}}(\na
v_1, \na v_2)\|_{L^2_t(L^\infty)}\|\tau^{\f {(1-s)\al} 2}v_1\bigr\|_{L^\infty_t(L^\f 2 {1-\al})}\\
&&\le Ct^{\f {s(1+\al+\al^2)} {2(1+\al)}}\d E(t). \eeno On the other
hand, it follows from  Gagliardo-Nirenberg inequality that
\begin{align*}
\|\tau^{-\f {(1-s)\al} 2}\d v\|_{L^\infty_t(L^{\f 2 \al})}
\le& C\|\tau^{-\f {1-s} {2}}\d v\|_{L^\infty_t(L^2)}^{\al}\|\na\d v\|_{L^\infty_t(L^2)}^{1-\al}\\
\le& Ct^{\f {s\al} 2}\|\p_t\d v\|_{L^2_t(L^2)}^{\al}\|\na\d v\|_{L^\infty_t(L^2)}^{1-\al}
\le Ct^{\f {s\al} 2}\d E(t),\\
\|\tau^{\f {(1-s)\al} 2}\na v_2\|_{L^2_t(L^{\f 2 {1-\al}})}
\le& C\|\na v_2\|_{L^2_t(L^2)}^{1-\al}\|\tau^{\f {1-s} 2}\na^2 v_2\|_{L^2_t(L^2)}^\al\le C,
\end{align*}
from which and (\ref{eq:regularity-v-2}), we infer that
\beno
\begin{split}
&\|\p_t\bigl[(Id-A_2)\d v\bigr]\|_{L^2_t(L^2)}\\
&\leq C\Bigl\{\|\na
v_2\d v\|_{L^2_t(L^2)}+\bigl\|\int_0^\tau|\na v_2|\,d\tau'|\na
v_2||\d v|\bigr\|_{L^2_t(L^2)}
+\bigl\|\int_0^\tau|\na v_2|\,d\tau'|\p_t\d v|\bigr\|_{L^2_t(L^2)}\Bigr\}\\
&\leq C\Bigl\{\|\tau^{-\f {(1-s)\al} 2}\d v\|_{L^\infty_t(L^{\f 2 \al})}\|\tau^{\f {(1-s)\al} 2}\na v_2\|_{L^2_t(L^{\f 2 {1-\al}})}\\
&\qquad\quad+\|\na v_2\|_{L^1_t(L^\infty)}
\bigl(\|\tau^{-\f {(1-s)\al} 2}\d v\|_{L^\infty_t(L^{\f 2 \al})}\|\tau^{\f {(1-s)\al} 2}\na v_2\|_{L^2_t(L^{\f 2 {1-\al}})}+\|\p_t\d v\|_{L^2_t(L^2)}\bigr)\Bigr\}\\
&\leq  Ct^{\f {s\al} 2}\d E(t).
\end{split}
\eeno Therefore, we arrive at \beq \label{eq:g-time-2d} \|\p_t\d
g\|_{L^2_t(L^2)}\leq Ct^{\f {s\al} 2}\d E(t). \eeq Then
(\ref{eq:source-est}) in the 2-D case follows from
(\ref{eq:f1-2d})-(\ref{eq:g-time-2d}).
\end{proof}

\bigskip

\noindent {\bf Acknowledgments.} The authors would like to thank the
anonymous referees for many profitable suggestions. Part of this
work was done when M. Paicu was visiting Morningside Center of the
Chinese Academy of Sciences in the Spring of 2012. We appreciate the
hospitality of MCM and the financial support from the Chinese
Academy of Sciences. P. Zhang is partially supported by NSF of China
under Grant 10421101 and 10931007, the one hundred talents' plan
from Chinese Academy of Sciences under Grant GJHZ200829 and
innovation grant from National Center for Mathematics and
Interdisciplinary Sciences. Z. Zhang is partially supported by NSF
of China under Grant 10990013 and 11071007.
\medskip

\end{document}